\newtheorem{theorem}{Theorem} [section]  
\newtheorem{problem}{Problem}
\newtheorem{lemma}[theorem]{Lemma}          
\newtheorem{proposition}[theorem]{Proposition}
\theoremstyle{definition}
\newtheorem{definition}{Definition}[section]    
\newtheorem*{remark}{Remark}             
\newtheorem*{solution}{Solution}         
\newtheorem{conjecture}{Conjecture}
\providecommand{\keywordss}[1]
{
  \small	
  \textbf{\textit{Keywords --- }} #1
}
\title{About a sequence of points and a relationship between pencils of conics and circles in the Euclidean plane}
\author{Andrija Živadinović \newline Veljko Toljić}
\email{zivadinovic.andrija@gmail.com}
\address{Gimnazija "Svetozar Marković" Niš, Republic of Serbia}
\email{tolja003@gmail.com}
\date{June 2021}
\begin{document}

\selectlanguage{english}

\begin{abstract}
   For a given triangle $\triangle ABC$, we define two sequences of points on line $BC$ and provide their generalizations to real functions such that centers of circumscribed circles around $A$ and adjacent points in subsequences generate a pencil of conics touching perpendicular bisectors of $AB$ and $AC$.
\end{abstract}

\maketitle
\keywordss{analytic geometry, locus constructions, conic sections, pencil of conics}

\section{Introduction }
The properties of conics regarding incidence have been extensively researched due to their applicability and are generally easier to tackle using projective geometry. However, metric properties, which are closely tied to the Euclidean plane have no better tools in general than analytic geometry. We provide a relationship between pencils of conics and circles.

All definitions and notation not introduced by authors can be found in \cite{2}

In a triangle $\triangle ABC$, trivially, the foot of a median divides $BC$ into two parts of equal lengths. Also well known is that the foot of the angle bisector divides $BC$ into two parts whose quotient is equal to $(\frac{AB}{AC})^{1}$. Similar holds for the foot of symedian: the quotient of cuts is equal to $(\frac{AB}{AC})^{2}$. If one observes carefully, 1 is the same as $(\frac{AB}{AC})^{0}$, so a pattern emerges. We can extend this sequence of points. 
\begin{definition}
Let $M_{k}$ for $k\in \mathbb{Z}$ be a point on segment $BC$ such that $$BM_{k}:M_{k}C=\left (\frac{AB}{AC}\right)^{k}$$

\end{definition}
\begin{definition}
Let $M'_k$ for $k\in \mathbb{Z}\backslash\{0\}$ be a point on the line $BC$ not lying on the segment such that $$BM'_{k}:M'_{k}C=\left (\frac{AB}{AC}\right)^{k}$$
\end{definition}
\begin{remark}
$M'_1$ is the foot of exterior angle bisector. $M'_2$ is the center of Apollonian circle of $\triangle ABC$ with respect to $BC$.
\end{remark}
\begin{remark}
As it turns out, these sequences are easy to construct, though the proof of the construction is not connected to the topic and is therefore left as an exercise for the reader:
\begin{enumerate}
    \item Given $M_i$ (or $M'_i$), we reflect it over $M_0$ to get $M_{-i}$ (or $M'_{-i}$)
    \item Given $M_{-i}$ (or $M'_{-i}$), we reflect line $AM_{-i} $ (or $AM'_{-i}$) over the interior angle bisector of $\angle BAC$ and in intersection with $BC$ we get $M_{i+2}$ (or $M'_{i+2}$)
\end{enumerate}
\end{remark}
An interesting thing happens when for each $i$ , one draws center $O$ of the circumscribed circle of triangle $\triangle AM_iM_{i+1}$. As it will be proven later, all points $O$ lie on a conic section. The same holds for $M'$ (Figure 1).

  \begin{figure}[htbp]
  \centering

  \includegraphics[scale=0.4]{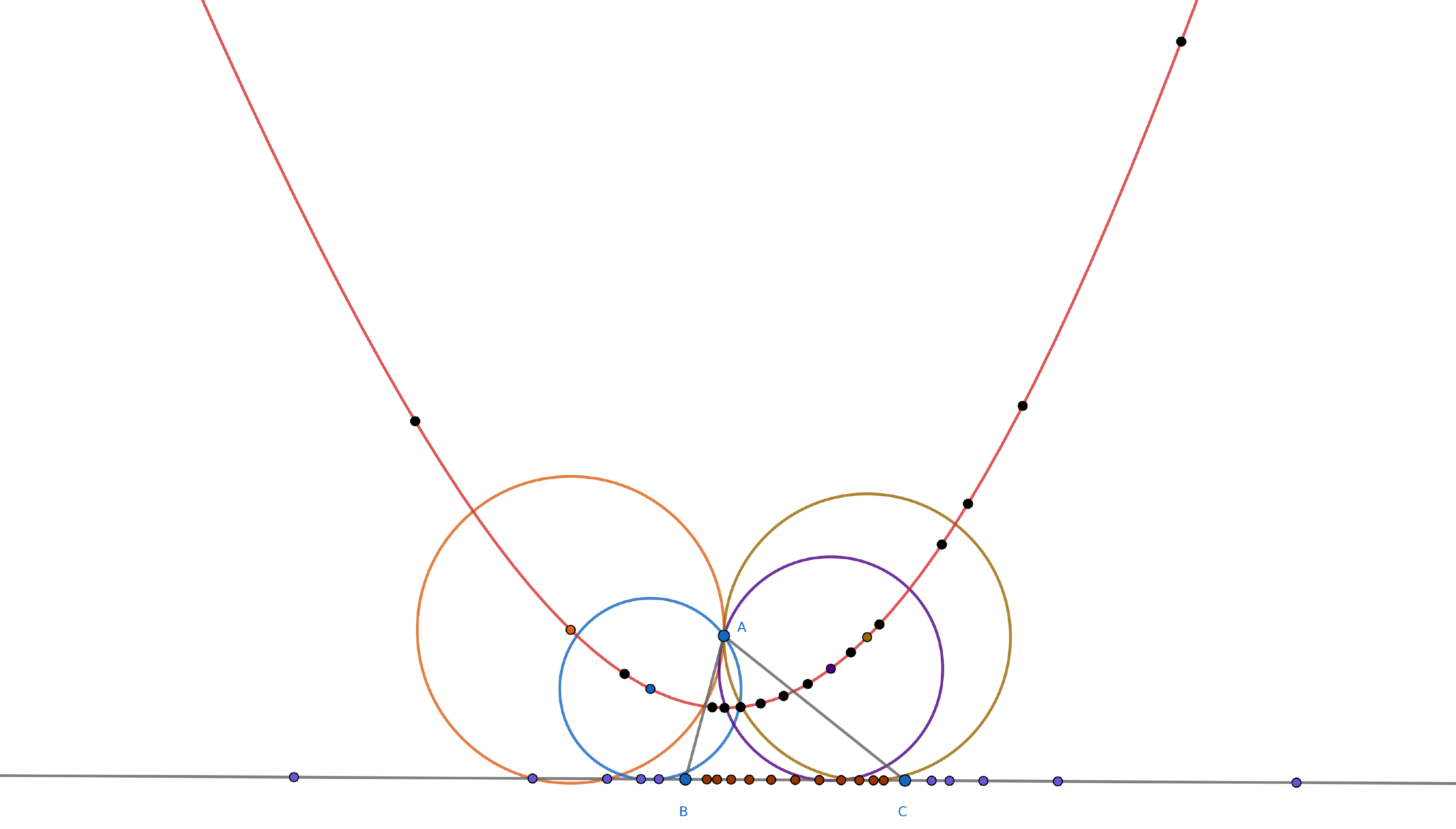}
  \caption{A conic built by centers of circumscribed circles}
 \end{figure}
Much more surprisingly, for every $k$, the centers of $\triangle AM_iM_{i+k}$  form a conic too and all these conics have 2 common points. (Figure 2)

We call those points $Z$ and $V$. Now we will extend the definition and state the main theorem.

  \begin{figure}[htbp]
  \centering

  \includegraphics[scale=0.4]{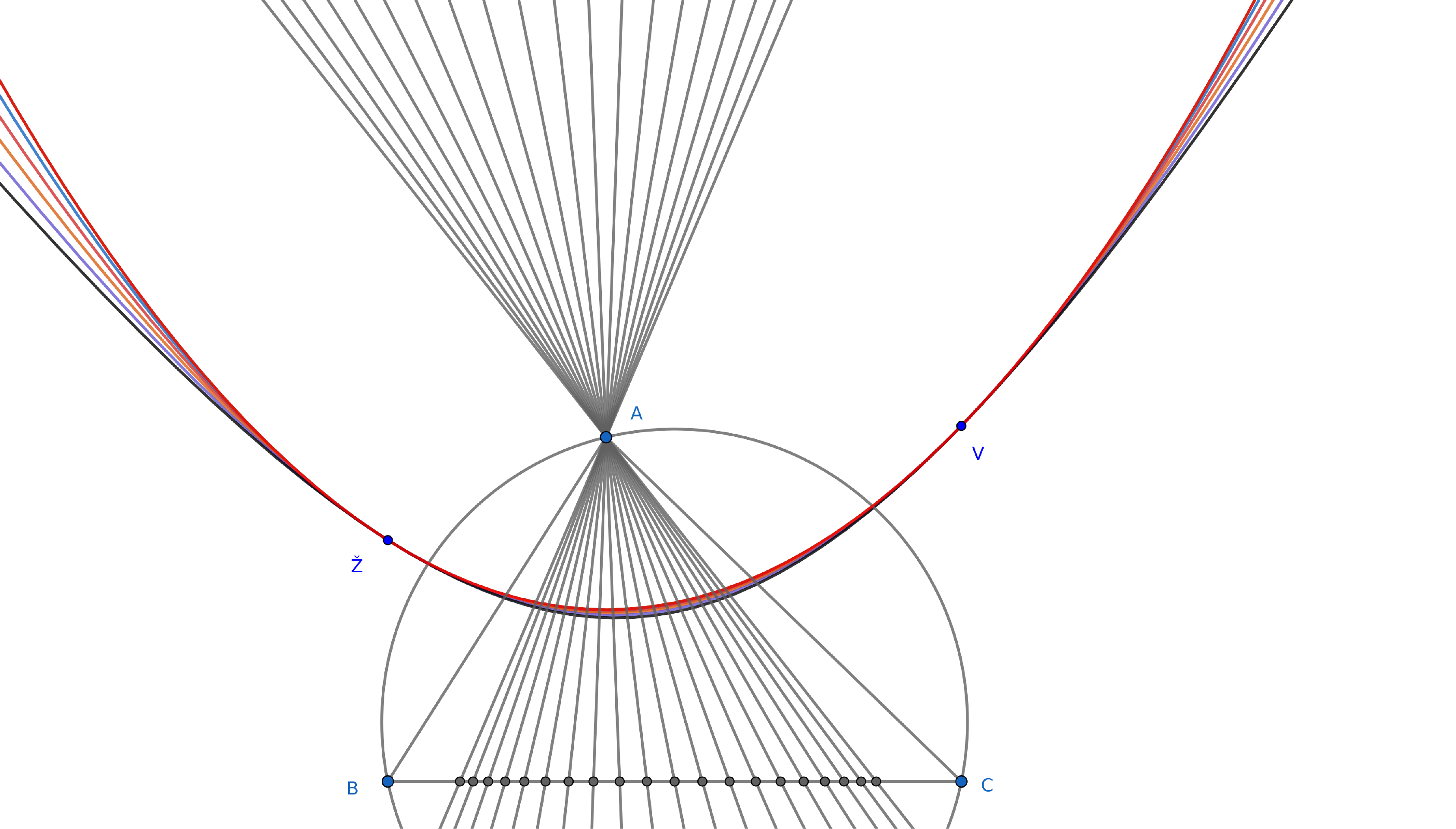}
  \caption{More conics}
 \end{figure}
 
 \newpage
\section{Main results}




%


\begin{definition}
For edge $BC$ of $\triangle ABC$ we define a mapping $M_a: \mathbb{R} \xrightarrow{} BC$ such that for $x \in \mathbb{R}$ $M_a(x)$ is a point on $BC$ such that $$BM_a(x):M_a(x) C=\left (\frac{AB}{AC}\right)^{x}$$
\end{definition}
\begin{definition}
For line $BC$ not including the edge, and for $x\in \mathbb{R}, x\neq 0$, we define $M'_a(x)$ analogously as $M_a(x)$.
\end{definition}
\begin{theorem}\label{thm1}
Let $ABC$ be a triangle, $AC\neq AB$. Let $r$ and $s$ be lines perpendicular to $BC$ containing $B$ and $C$, respectively. Let $Z$ and $V$ be intersections of $r$ and the perpendicular bisector of $AB$ and $s$ and perpendicular bisector of $AC$, respectively.
\begin{enumerate}
    \item For any fixed $t\in \mathbb{R}$, for every $k\in \mathbb{R}$, circumcenter of triangle $\triangle AM_a(k)M_a(k+t)$ lies on a fixed non-degenerate conic section. Let that curve be $g$. Then $t$ is called the \textit{span} of the curve $g$ and we write $Sp(g)=t$.
    \item For any fixed $t\in \mathbb{R}$, for every $k\in \mathbb{R}$, $k\notin \{0,-t\}$, circumcenter of triangle 
    
    $\triangle AM'_a(k)M'_a(k+t)$ lies on a fixed non-degenerate conic section.  Then $t$ is called the \textit{span} of the curve $g$ and we write $Sp(g)=t$.
    \item For any fixed $t\in \mathbb{R}\backslash\{0\}$, for every $k\in \mathbb{R}$, $k\neq 0$ , circumcenter of triangle $\triangle AM'_a(k)M_a(k+t)$ lies on a fixed non-degenerate conic section. Then $t$ is called the \textit{span} of the curve $g$ and we write $Sp(g)=t$.
    \item $Z$ and $V$ lie on every curve described in previous statements such that the tangent to every curve in $Z$ and $V$ are perpendicular bisectors of $AB$ and $AC$ respectively.
    \item If curve $g$ is described in ($i$), for $i\in\{1,2,3\}$, we say that $g$ is from $(i)$.If $g$ and $h$ are both from (1), (2) or (3) and $|Sp(g)|=|Sp(h)|$, then $g=h$. If $g$ is from (1) , $h$ is from (2) and $Sp(h)=Sp(g)$, then $g=h$.
 \end{enumerate}
\end{theorem}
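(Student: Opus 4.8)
\emph{Plan of proof.} I would prove all five parts at once by a single coordinate computation. Put $BC$ on the $x$-axis with $B=(-1,0)$, $C=(1,0)$, $A=(p,q)$; then $q\neq 0$ (since $ABC$ is a triangle) and $p\neq 0$ (since $AB\neq AC$). Set $\rho=AB/AC\neq 1$ and $a=\rho^{k}$, so $a$ runs over $(0,\infty)$ as $k$ runs over $\mathbb{R}$, and put $c=\rho^{t}$. By the internal division formula the $x$-coordinate of $M_a(k)$ is $\frac{a-1}{a+1}$, and by external division the $x$-coordinate of $M'_a(k)$ is its harmonic conjugate with respect to $B,C$, namely $\frac{a+1}{a-1}$; replacing $a$ by $ac$ gives the $x$-coordinates of $M_a(k+t)$ and $M'_a(k+t)$. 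If a triangle has apex $A=(p,q)$ and base vertices $(x_1,0),(x_2,0)$ on the $x$-axis, its circumcircle meets the $x$-axis at $x_1$ and $x_2$, so its center is
\[ O=\Bigl(\tfrac{x_1+x_2}{2},\ \tfrac{p^{2}+q^{2}-(x_1+x_2)p+x_1x_2}{2q}\Bigr). \]
Writing $\sigma=x_1+x_2$ and $\pi=x_1x_2$, the map $(\sigma,\pi)\mapsto O$ is an affine isomorphism of the plane (its linear part has determinant $\tfrac1{4q}\neq 0$), so the locus of $O$ is a nondegenerate conic exactly when the locus of $(\sigma,\pi)$ is.

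The crux is a one-line elimination of $a$. In case (1), from $x_1=\frac{a-1}{a+1}$ and $x_2=\frac{ac-1}{ac+1}$ I solve $a=\frac{1+x_1}{1-x_1}$ and $ac=\frac{1+x_2}{1-x_2}$; dividing removes $a$, and clearing denominators collapses everything to $(1+c)(x_1-x_2)=(1-c)(1-\pi)$. Thus $x_1-x_2$ is affine in $\pi$ alone, and squaring, using $(x_1-x_2)^{2}=\sigma^{2}-4\pi$, gives the normal form
\[ \sigma^{2}-4\pi=\lambda^{2}(1-\pi)^{2},\qquad \lambda=\tfrac{1-c}{1+c}, \]
which, transported back to $O$, is the conic $g$ of (1). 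The identical computation with the $M'$-coordinates reproduces \emph{verbatim} the relation $(1+c)(x_1-x_2)=(1-c)(1-\pi)$: this proves (2) and, simultaneously, the last clause of (5), since a curve from (1) and a curve from (2) of the same span then satisfy the same equation and so coincide. In the mixed case (3) the elimination yields instead $(c-1)(x_1-x_2)=(c+1)(\pi-1)$, i.e. the same shape with $\lambda=\frac{1+c}{1-c}$ (so $t\neq 0$ is needed here). Nondegeneracy is read off the normal form: the determinant of the $3\times 3$ symmetric matrix of the homogenized conic $\sigma^{2}-4\pi-\lambda^{2}(1-\pi)^{2}$ equals $4(\lambda^{2}-1)$, and $\lambda^{2}=\bigl(\tfrac{1-c}{1+c}\bigr)^{\pm 2}\neq 1$ for every $c=\rho^{t}>0$; the quadratic part has discriminant $4\lambda^{2}\ge 0$, so each $g$ is a hyperbola, degenerating to a parabola precisely in the edge case $c=1$ (the $t=0$ case of (1), where the two base points coincide and one takes the circle through $A$ tangent to $BC$).

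For (4): $Z$ has $x$-coordinate $-1$ and is equidistant from $A$ and $B$, while $V$ has $x$-coordinate $1$ and is equidistant from $A$ and $C$; applying the affine map above sends them to $(\sigma,\pi)=(-2,1)$ and $(2,1)$ respectively. Both satisfy $\sigma^{2}-4\pi=0=\lambda^{2}(1-\pi)^{2}$ for \emph{every} value of $\lambda$, so $Z$ and $V$ lie on all the conics of (1)--(3). The gradient of $\sigma^{2}-4\pi-\lambda^{2}(1-\pi)^{2}$ at $(\mp 2,1)$ is $(\mp 4,-4)$, independent of $\lambda$, so the tangent at $Z$ (resp.\ at $V$) to every one of these conics is the fixed line $\sigma+\pi+1=0$ (resp.\ $\sigma-\pi-1=0$); pulling this line back through the affine map and comparing with the equation of the perpendicular bisector of $AB$ (resp.\ $AC$) shows they agree. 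Conceptually, $Z$ and $V$ are the limit positions of the circumcenter as $a\to 0$ and $a\to\infty$, i.e.\ as the two base points collapse onto $B$ and $C$ and the circumcircle becomes tangent to $BC$ there; that is why they are common to all the curves. Finally, the first two clauses of (5): within each of the three families the conic depends on $t$ only through $\lambda^{2}$, and $\lambda^{2}$ is an even function of $t$ (since $\tfrac{1-\rho^{-t}}{1+\rho^{-t}}=-\tfrac{1-\rho^{t}}{1+\rho^{t}}$), so $|Sp(g)|=|Sp(h)|$ forces $\lambda_{g}=\lambda_{h}$ and hence $g=h$.

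The only genuine obstacle I foresee is organizational rather than technical: hitting on the substitution $(x_1,x_2)\mapsto(\sigma,\pi)$ together with the pairing $a=\frac{1+x_1}{1-x_1}$ that makes the parameter vanish in a single step, rather than through an unwieldy resultant. Once the normal form $\sigma^{2}-4\pi=\lambda^{2}(1-\pi)^{2}$ is in place, nondegeneracy, the incidence of $Z$ and $V$, the two tangencies, and every coincidence among the curves reduce to short mechanical checks.
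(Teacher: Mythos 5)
Your proof is correct, and it takes a genuinely different --- and considerably cleaner --- route than the paper's. The paper also works in Cartesian coordinates with $BC$ on the $x$-axis, but it eliminates the parameter by solving the quadratic relating $x_O$ to $m=x_{M_k}$, substituting the resulting radical into the perpendicular-bisector equation and squaring; the conic equation is then a large expression produced with SymPy, and the incidence of $Z$ and $V$, the tangencies, the coincidences in (5), and several corner cases (notably when $AM_k$ is an altitude, so the slope of $AM_k$ is vertical, and when $t=0$) are each handled by separate substitutions into that expression. Your change of variables $(x_1,x_2)\mapsto(\sigma,\pi)=(x_1+x_2,\,x_1x_2)$, under which the circumcenter map is an affine isomorphism, buys essentially everything at once: eliminating $a$ reduces to the single linear relation $(1+c)(x_1-x_2)=(1-c)(1-\pi)$ (which I have checked, as well as its verbatim recurrence for the $M'$ case and the sign-flipped version in the mixed case); the normal form $\sigma^{2}-4\pi=\lambda^{2}(1-\pi)^{2}$ makes non-degeneracy explicit via the determinant $4(\lambda^{2}-1)\neq 0$ --- a point the paper asserts but never actually verifies; the altitude corner case disappears because no slope is ever inverted; the identification of curves from (1) and (2) and the evenness in $t$ are read off from the dependence on $\lambda^{2}$ alone; and the tangency at $Z$ and $V$ follows from a gradient computation together with the observation that $\sigma+\pi+1=(1+x_1)(1+x_2)$ cuts out exactly the circles through $B$, whose centers form the perpendicular bisector of $AB$. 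Two minor wording corrections: the span-$0$ curve is a non-degenerate parabola, so ``degenerating to a parabola'' should be avoided; and in (5) the conclusion from $|Sp(g)|=|Sp(h)|$ is $\lambda_g^{2}=\lambda_h^{2}$ rather than $\lambda_g=\lambda_h$, which suffices since the equation depends only on $\lambda^{2}$.
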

Let $\mathbb{F}$ be the pencil of conics touching perpendicular bisector of $AB$ in $Z$ and perpendicular bisector of $AC$ in $V$.
\begin{theorem}\label{thm2}Every hyperbola or parabola in $\mathbb{F}$ can be obtained in a way described in Theorem \ref{thm1}, which means that the circle through $A$ around any point on them cuts $BC$.
\end{theorem}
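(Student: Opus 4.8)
The plan is to work in coordinates with $BC$ on the $x$-axis, $B=(0,0)$, $C=(a,0)$, $A=(p,q)$ with $q>0$, and to fix once and for all the linear forms $L_Z=px+qy-\tfrac12(p^{2}+q^{2})$ (the perpendicular bisector of $AB$, which is the tangent of $\mathbb F$ at $Z$), $L_V=(p-a)x+qy-\tfrac12(p^{2}-a^{2}+q^{2})$ (the perpendicular bisector of $AC$, the tangent at $V$) and $\ell=(2p-a)x+2qy-(p^{2}+q^{2})$ (the line $ZV$). A conic meeting $L_Z$ only at $Z$ and $L_V$ only at $V$ restricts on those lines to a multiple of $\ell^{2}$, so $\mathbb F=\{\alpha L_ZL_V+\beta\ell^{2}=0:[\alpha:\beta]\in\mathbb P^{1}\}$, and since $L_ZL_V$ (rank $2$) and $\ell^{2}$ (rank $1$) are independent quadratic forms, the members of $\mathbb F$ are faithfully labelled by $\rho=\beta/\alpha$. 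Two elementary computations then control the pencil: the discriminant of the degree-two part of $\alpha L_ZL_V+\beta\ell^{2}$ turns out to be $a^{2}q^{2}\,\alpha(\alpha+4\beta)$, so a nondegenerate member is a hyperbola when $1+4\rho>0$, the unique parabola when $\rho=-\tfrac14$, and an ellipse when $1+4\rho<0$; and because $\ell^{2}$ has a rank-one matrix, $\det(\alpha M_{1}+\beta M_{2})$ (with $M_{1},M_{2}$ the matrices of $L_ZL_V$, $\ell^{2}$) is a nonzero multiple of $\alpha^{2}\beta$, so the only degenerate members are $L_ZL_V$ ($\rho=0$) and $\ell^{2}$ ($\rho=\infty$). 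Hence the hyperbolas and parabolas of $\mathbb F$ are precisely the members with $\rho\in[-\tfrac14,0)\cup(0,\infty)$.

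Next I would read off $\rho(g)$ for each curve $g$ produced by Theorem~\ref{thm1}. The key observation is that one can always arrange the midpoint $M_{0}$ of $BC$ to be a vertex of the relevant triangle: for family (1) take the free index $k=0$, so the triangle is $\triangle A\,M_{0}\,M_{a}(t)$; for family (3) take $k=-t$, so it is $\triangle A\,M'_{a}(-t)\,M_{0}$; and family (2) is redundant, producing the same curves as family (1) by the last part of Theorem~\ref{thm1}. Since the circumcenter $O$ of such a triangle is equidistant from $A$ and the fixed point $M_{0}$, it lies on the perpendicular bisector of $AM_{0}$, which happens to be parallel to $\ell$; feeding this relation into $L_Z,L_V,\ell$ gives $\ell(O)=-\tfrac{a^{2}}{4}$ and, writing $\xi$ for the abscissa of the variable third vertex on line $BC$, the single clean identity $\rho(g)=-L_Z(O)L_V(O)/\ell(O)^{2}=\xi(\xi-a)/a^{2}$. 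In family (1) the third vertex is $M_{a}(t)$, whose abscissa runs through all of $(0,a)$ as $t$ runs over $\mathbb R$, so $\rho(g)$ runs through exactly $[-\tfrac14,0)$, the endpoint $-\tfrac14$ attained only at $t=0$; in family (3) the third vertex is $M'_{a}(-t)$, whose abscissa runs through all of $(-\infty,0)\cup(a,\infty)$ as $t$ runs over $\mathbb R\setminus\{0\}$, so $\rho(g)$ runs through exactly $(0,\infty)$. Surjectivity here is just monotonicity of $\xi\mapsto\xi(\xi-a)$ on each interval.

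The theorem is then finished by matching labels. Let $\Gamma\in\mathbb F$ be a hyperbola or parabola, so $\rho(\Gamma)\in[-\tfrac14,0)\cup(0,\infty)$. If $\rho(\Gamma)\in(-\tfrac14,0)$, choose a span $t\neq0$ so that the abscissa $\xi$ of $M_{a}(t)$ satisfies $\xi(\xi-a)/a^{2}=\rho(\Gamma)$; by Theorem~\ref{thm1} the family-(1) curve of span $t$ lies in $\mathbb F$ and carries label $\rho(\Gamma)$, hence equals $\Gamma$. If $\rho(\Gamma)\in(0,\infty)$, argue identically with the family-(3) curve of a suitable span $t\neq0$. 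If $\rho(\Gamma)=-\tfrac14$, then $\Gamma$ is the parabola with focus $A$ and directrix $BC$: it passes through $Z$ with tangent $L_Z$ and through $V$ with tangent $L_V$, hence lies in $\mathbb F$, and being a parabola it is the $\rho=-\tfrac14$ member; it is the span-$0$ curve of family (1), since for every $P$ on it the circle of radius $PA$ about $P$ is tangent to $BC$ by the focal property. In every case the asserted consequence --- the circle through $A$ about any point of $\Gamma$ meets $BC$ --- falls out of the construction, the intersection points being the corresponding $M_{a}$ or $M'_{a}$, which merge into one when the span is $0$.

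The real content is the single identity $\rho(g)=\xi(\xi-a)/a^{2}$; everything after it is surjectivity of one real quadratic on two intervals. I expect the only genuinely delicate point to be the parabola $\rho=-\tfrac14$, which is not a curve from family (1) with $t\neq0$ but only the boundary span-$0$ case, so one must either formally admit the degenerate triangle $\triangle A\,M_{a}(k)\,M_{a}(k)$ together with its circle tangent to $BC$, or identify that member of $\mathbb F$ directly as the focus--directrix parabola as above. No other obstacle is anticipated; in particular the members of $\mathbb F$ that are \emph{not} reached are exactly the ellipses ($\rho<-\tfrac14$) and the two degenerate conics, consistent with the statement.
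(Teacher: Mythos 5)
Your route is genuinely different from the paper's. The paper never parametrizes the pencil: it proves Theorem \ref{thm2} by showing (Lemmas \ref{lem0}--\ref{lem2}) that no point of a hyperbola of $\mathbb{F}$ can lie strictly inside the focus--directrix parabola, so that every point of every hyperbola or parabola of $\mathbb{F}$ admits a circle through $A$ cutting line $BC$, and hence a representation as a circumcenter from Theorem \ref{thm1}. You instead write $\mathbb{F}=\{\alpha L_ZL_V+\beta\ell^2=0\}$, classify its members by $\rho=\beta/\alpha$ via the discriminant $a^2q^2\alpha(\alpha+4\beta)$ (I checked this; it is correct), and show via the power-of-a-point identity $\rho(g)=\xi(\xi-a)/a^2$ (also correct: $2L_Z(O)$ and $2L_V(O)$ are the powers of $B$ and $C$ with respect to the circumcircle, and $\ell(O)=-\tfrac{a^2}{4}$ on the perpendicular bisector of $AM_0$) that the spans of families (1) and (3) sweep out exactly the labels $[-\tfrac14,0)\cup(0,\infty)$ of the nondegenerate non-ellipses. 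This is cleaner and more quantitative than the paper's topological argument, and it essentially re-proves the classification theorem of the Applications section for free; the cost is reliance on the bitangent-pencil normal form rather than elementary geometry.

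There is, however, one genuine gap: the final clause, ``the circle through $A$ around any point on them cuts $BC$,'' does not simply ``fall out of the construction.'' Knowing that $\Gamma$ \emph{equals} the Theorem \ref{thm1} conic of span $t$ only tells you that the circumcenters $O_{k,k+t}$ lie on $\Gamma$, not that every point of $\Gamma$ is such a circumcenter. Indeed $k\mapsto O_{k,k+t}$ is continuous on $\mathbb{R}$, so for family (1) its image is a connected arc (with $Z$ and $V$ as its limit points), hence contained in a single branch of the hyperbola; the other branch, and $Z$, $V$ themselves, are not reached by that family. This is exactly the content the paper supplies with Lemmas \ref{lem0} and \ref{lem2}. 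Fortunately your own machinery closes the gap in one line: with $2L_Z(P)=|PB|^2-|PA|^2$, $2L_V(P)=|PC|^2-|PA|^2$ and $\ell(P)=L_Z(P)+L_V(P)-\tfrac{a^2}{2}$, the condition $\rho(P)\ge-\tfrac14$, i.e.\ $4L_Z(P)L_V(P)\le\ell(P)^2$, reduces after cancellation to $|PA|^2\ge y_P^2$, which is precisely the statement that the circle centered at $P$ through $A$ meets line $BC$. Since every point $P$ of a hyperbola or parabola of $\mathbb{F}$ other than $Z$ and $V$ has $\rho(P)=\rho(\Gamma)\ge-\tfrac14$, and at $Z$ and $V$ the circle is tangent to $BC$ at $B$ or $C$, the clause follows. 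With that sentence added, your proof is complete.
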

In the section Applications, we will extend this statement even further, describing and classifying the whole pencil of conics.
\begin{remark}
$ZV$ is perpendicular to $AM_0$, where $M_0$ is the center of $BC$
\end{remark}
\begin{remark}
$ZV\cap BC$ is the center of the Apollonian circle of $\triangle ABC$ with respect to $BC$, or $M'_1$
\end{remark}
\section{Proof of Theorem \ref{thm1}}
For simplicity in notation, we will first prove Theorem \ref{thm1}(1) as if we were dealing with discrete $M$ and $M'$. As we will see, the proof is not dependent of $k$ and $t$ being integers.

We provide an analytic proof of Theorem \ref{thm1}(1). We set the problem into an Cartesian coordinate system with center in $B$ such that point $C$ has coordinates $(1,0)$, therefore the coordinates of $A$ are free, let $A\;(x_A,y_A)$. Let $c$ be $|AC|$  , $c$ be $|AB|$ and let $a$ be $|BC|$.

We will now calculate coordinates of point $M_{k}$, for some $k\in\mathbb{R}$. Because line $BC$ is the x-axis, we know that y coordinate of point $M_{k}$ is zero. For calculating x coordinate we will use these two expressions:
$$|M_{k}C| = \left(\frac{b}{c}\right)^{k} \cdot |BM_{k}| $$
$$|BM_{k}| + |M_{k}C| = |BC| = 1$$
\newpage
From this we can get:
$$|BM_{k}| \cdot (1 +  \left(\frac{b}{c}\right)^{k}) = 1$$
or
$$|BM_{k}| = \frac{c^{k}}{c^{k}+b^{k}}$$
We defined that point $B$ is the center of the coordinate system, and that line $BC$ is the x-axis, so $|BM_{k}|$ is the x coordinate of point $M_{k}$.

  \begin{figure}[htbp]
  \centering

  \includegraphics[scale=0.35]{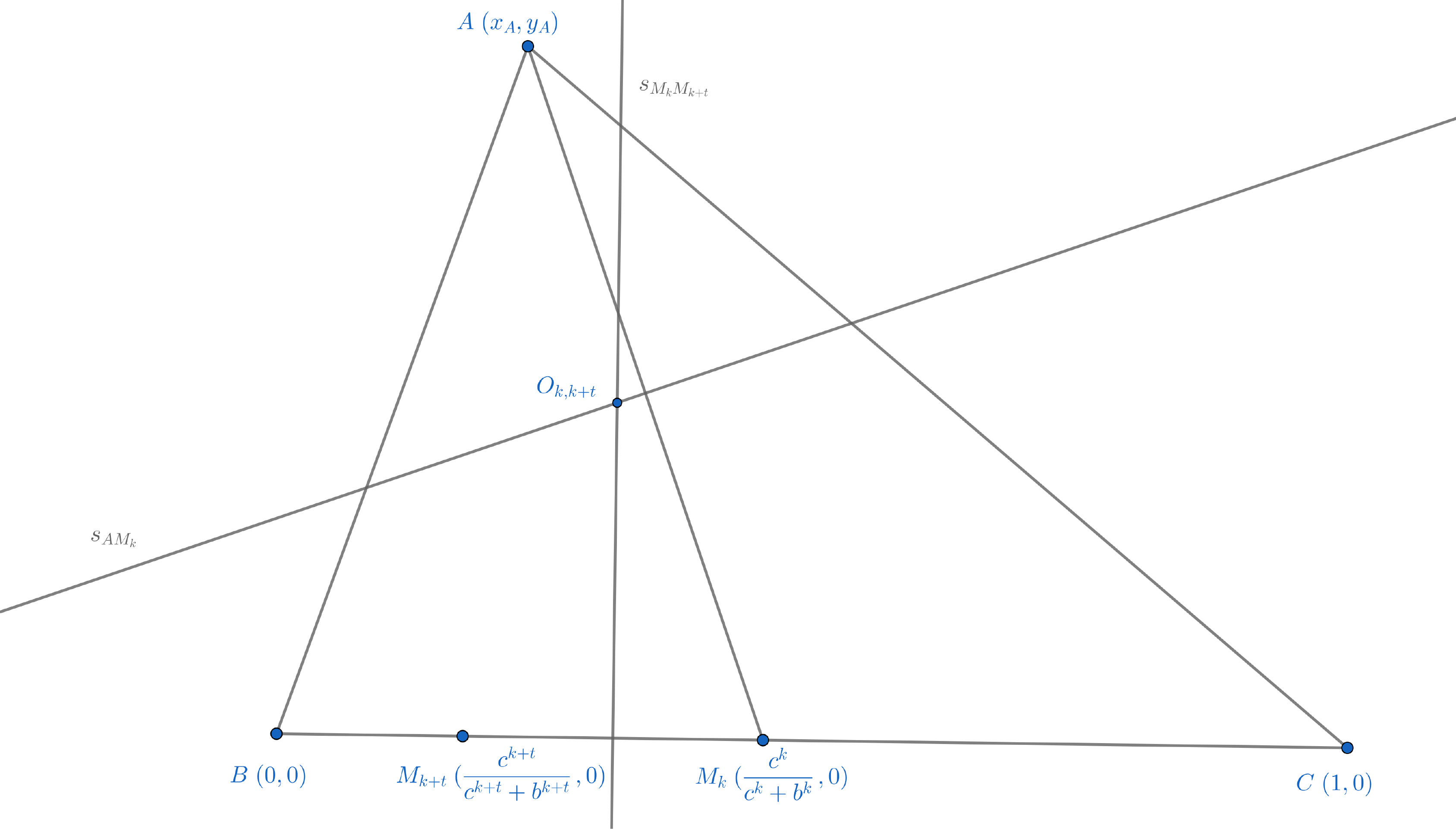}
  \caption{Analytic proof of Theorem \ref{thm1}(1)}
 \end{figure}

Let $O_{k,k+t}\;(x_O,y_O) $ be the circumcenter of triangle $\triangle AM_{k}M_{k+t}$. We know that point $O_{k,k+t}$ lies on perpendicular bisector of segment $M_{k}M_{k+t}$, so its x coordinate is the arithmetic mean of x coordinates of points $M_{k}$ and $M_{k+t}$, or:
\begin{equation}
    \label{eq:1}
    x_{O} = \frac{\frac{c^{k}}{c^{k}+b^{k}} + \frac{c^{k+t}}{c^{k+t}+b^{k+t}}}{2}
\end{equation}
Let's assume that $AM_k$ is not the height of the triangle(that case will be done separately).
The slope of the line $AM_k$ is: 
$$k_{AM_k}=\frac{y_{A}}{x_{A}-\frac{c^{k}}{c^{k}+b^{k}}}$$
Let $S$ be the midpoint of $AM_k$. Then the coordinates of $S$ are :
$$S\; (\frac{x_A+\frac{c^{k}}{c^{k}+b^{k}}}{2},\frac{y_A}{2})$$
The slope of $k_s$ of the perpendicular bisector of $AM_k$ is:
$$k_s=\frac{\frac{c^{k}}{c^{k}+b^{k}}-x_{A}}{y_{A}}$$
We can now easily calculate the constant term $n_{s}$ of the perpendicular bisector of $AM_k$:
$$n_s=\frac{x_A^2+y_A^2-\left(\frac{c^{k}}{c^{k}+b^{k}}\right)^2}{2y_A}$$
The following equation holds for coordinates of $O_{k,k+t}$:
\begin{equation}\label{eq:2}
y_O=\frac{\frac{c^{k}}{c^{k}+b^{k}}-x_{A}}{y_{A}}\cdot x_O+\frac{x_A^2+y_A^2-\left(\frac{c^{k}}{c^{k}+b^{k}}\right)^2}{2y_A}
\end{equation}

Here comes the tricky part: We will label $\frac{c^{k}}{c^{k}+b^{k}}$ with $m$ and $\frac{c^{k+t}}{c^{k+t}+b^{k+t}}$ with $n$.

We can now get $$\frac{1}{m}=1+\left(\frac{b}{c}\right)^{k}$$
and 
$$\frac{1}{n}=1+\left(\frac{b}{c}\right)^{k+t}$$

Now we can get the relationship between $m$ and $n$ by canceling out the parametric part $\left(\frac{b}{c}\right)^{k}$ :

$$\frac{\frac{1}{n}-1}{\frac{1}{m}-1}=\left(\frac{b}{c}\right)^{t}$$
By further simplification we get:
\begin{equation}\label{eq:left1}
    n=\frac{c^t\cdot m}{(c^t-b^t)\cdot m+b^t}
\end{equation}
From now on we denote $p$ to be $b^t$ and $l$ to be $c^t$.

Equation \eqref{eq:1} now becomes:

\begin{equation}\label{eq:3}
    x_O=\frac{m+\frac{l\cdot m}{m(l-p)+p}}{2}
\end{equation}
and equation \eqref{eq:2} becomes:
\begin{equation}\label{eq:4}
y_O=\frac{m-x_{A}}{y_{A}}\cdot x_O+\frac{x_A^2+y_A^2-m^2}{2y_A}
\end{equation}

Now we see that equation \eqref{eq:3} is quadratic in terms of $m$. We can solve it and substitute solution for $m$ into equation \eqref{eq:4}. That is how we get a relationship between $x_O$ and $y_O$ which is not parametric, and see that it is a conic equation.

Quadratic equation derived from \eqref{eq:3}:

\begin{equation}\label{eq:5}
    (l-p)\cdot m^2 + (2x_O\cdot(p-l)+p+l)\cdot m - 2x_Op
\end{equation}

Solutions for $m$ from equation \eqref{eq:5}:
$$m_{1,2}=\frac{2 (l -p)x_{O} - l - p \pm \sqrt{4 l^{2} x_{O}^{2} - 4 l^{2} x_{O} + l^{2} - 8 l p x_{O}^{2} + 8 l p x_{O} + 2 l p + 4 p^{2} x_{O}^{2} - 4 p^{2} x_{O} + p^{2}}}{2(l - p)}$$
We used sympy powered program to do the dirty work. We will provide the link to the program at the end of the paper: \cite{link}. This is the solution where $l\neq p$, the case $p=l$ will be handled separately.

First, we labeled the square root of the discriminant of the quadratic equation with $D$. Then we used the quadratic equation to get rid of $m^2$ in equation \eqref{eq:4}. When we got the equation in terms of linear $m$, we substituted in $D$ and solved for it. Note that the $\pm$ will disappear since we are squaring, but therefore we lost the equivalence, which doesn't mean very much. Then we substitute the value of the discriminant back and simplify the equation. This is how we obtain the conic equation:
    \begin{equation}\label{eq:6}
        4  y_{O}^{2} y_{A}^{2}\left(p-l\right)^{2}
    \end{equation}
    $$+ x_{O}  y_{O} y_{A} \left(p-l\right)^{2} \left(8 x_{A} - 4\right)$$ 
    $$ + x_{O}^{2} \left(\left(p-l\right)^{2} \left(2 x_{A} - 1\right)^{2}- \left(p+l\right)^{2}\right) $$
    $$- 2y_{O} y_{A}\left(2 c^{2} \left(p-l\right)^{2}- \left(p+l\right)^{2}\right)$$
    $$+ x_{O} \left(- 2 c^{2} \left(p-l\right)^{2} \left(2 x_{A} - 1\right) + 2 x_{A} \left(p+l\right)^{2}\right)$$ 
    $$+c^{2} \left(c^{2} \left(p-l\right)^{2} - \left(p+l\right)^{2}\right)=0$$

\newpage
And now we will cover corner cases:

If $AM_k$ is the height of the triangle, then we know two things: x-coordinate of $M_k$ is $x_A$ and the y-coordinate of $O_{k,k+t}$ is $\frac{y_A}{2}$. Because the relationship between $n$ and $m$, equation \eqref{eq:left1}, still holds, we can calculate $x_O$ by substituting $n$ and $m$ into \eqref{eq:1}:

$$x_O=\frac{x_A+\frac{l\cdot x_A}{(l-p)x_A+p}}{2}$$
We now substitute coordinates of $O_{k,k+t}$ into \eqref{eq:6} (using computer this is easy).

Another corner case comes from the fact that we, through the solving of quadratic equation \eqref{eq:5} we divided by $p-l$, so the case $p=l$ is not yet covered. For this, in the statement of the problem, we assumed $c\neq b$, so $p$ can be equal to $l$ if and only if $t=0$. When $t=0$, all our points are equidistant from $A$ and the line $BC$, so by definition, they lie on a parabola with focus $A$ and directrix $BC$. 

Now for the second statement of the theorem, we will calculate the coordinates of the circumcenter and plug them into \eqref{eq:6}. (This will also prove the second sentence in the fifth statement). Analogously to the point $M_k$ we get that the x-coordinate of $M'_k$ is $\frac{c^k}{c^k-b^k}$.Let $O_{k,k+t}$, again, denote the center of the circumscribed circle around $\triangle AM'_kM'_{k+t}$ To get the x-coordinate of $O_{k,k+t}$, we , analogously to the first statement, calculate the arithmetic mean of x-coordinates of $M'_k$ and $M'_{k+t}$:
$$x_O=\frac{\frac{c^{k} l}{ c^{k} l- b^{k} p} + \frac{c^{k}}{c^{k} - b^{k}}}{2}$$

Similarly, the y-coordinate of $O_{k,k+t}$ is :

$$y_O=\frac{\frac{c^{k}}{c^k-b^k}-x_A}{y_A}\cdot x_O+\frac{c^2-\left(\frac{c^k}{c^k-b^k}\right)^2}{2 y_A}$$
By plugging this into the conic equation, we get $0$ which finishes the proof.

For (3), we will show that the points lie on this conic section:
    \begin{equation}\label{eq:7}
        4  y_{O}^{2} y_{A}^{2}\left(p+l\right)^{2}
    \end{equation}
    $$+ x_{O}  y_{O} y_{A} \left(p+l\right)^{2} \left(8 x_{A} - 4\right)$$ 
    $$ + x_{O}^{2} \left(\left(p+l\right)^{2} \left(2 x_{A} - 1\right)^{2}- \left(p-l\right)^{2}\right) $$
    $$- 2y_{O} y_{A}\left(2 c^{2} \left(p+l\right)^{2}- \left(p-l\right)^{2}\right)$$
    $$+ x_{O} \left(- 2 c^{2} \left(p+l\right)^{2} \left(2 x_{A} - 1\right) + 2 x_{A} \left(p-l\right)^{2}\right)$$ 
    $$+c^{2} \left(c^{2} \left(p+l\right)^{2} - \left(p-l\right)^{2}\right)=0$$
Notice that this conic is exactly \eqref{eq:6} when we negate $l$. First we will calculate the coordinates of $O_{k,k+t}$ around $AM_kM'_{k+t}$, and once we do this, as $t$ can be negative, we have proven the fact for $AM'_{k}M_{k+t}$ :
$$x_O=\frac{\frac{c^{k} l}{ c^{k} l- b^{k} p} + \frac{c^{k}}{c^{k} + b^{k}}}{2}$$
$$y_O=\frac{\frac{c^{k}}{c^k+b^k }-x_A}{y_A}\cdot x_O+\frac{c^2-\left(\frac{c^k}{c^k+b^k}\right)^2}{2 y_A}$$

Now we prove respective corner cases for Theorem \ref{thm1}(2) and (3):
For the case where $AM'_k$ is the height in (2) the coordinates of $O_{k,k+t}$ are the same as for (1) due to minuses canceling out, so that case is already checked.

For the case where $AM_k$ is the height in (3), the coordinates of $O_{k,k+t}$ are:
$$y_O=\frac{y_A}{2}$$
$$x_O=\frac{x_A+\frac{l\cdot x_A}{(l+p)x_A - p}}{2}$$
Now we check that this point satisfies \eqref{eq:7} and therefore lies on the conic.

The corner case of zero span is already solved for (2) because the curves coincide. The case where $t=0$ is left out of the statement (3), although it still holds, we will not prove it here, but one could easily check that all the points lie on the line $ZV$.
Now we prove the first sentence of Theorem \ref{thm1}(5): For curves of type 1 and 2 it is trivial because triangle $\triangle AM_kM_{k+t}$(or $M'$) is the same as triangle $\triangle AM_{k+t}M_{k}$(or $M'$), first of which has span $+t$ and the second has span $-t$.

For curves of type 3, for some fixed $t$, $l$ and $p$ from equation \eqref{eq:7} will transform into $\frac{1}{l}$ and $\frac{1}{p}$ when we write formula of our conic section for span $-t$. If we write $(\frac{1}{p}\pm \frac{1}{l})^{2}$ as $(pl)^{-2}\cdot(l\pm p)^{2}=(pl)^{-2}\cdot(p\pm l)^{2}$, suddenly our polynomial becomes divisible by $pl^{-2}$ and since $pl\neq 0$, by dividing we get the polynomial for span $t$, which means the curve is identical.

Now we will prove Theorem1(4), and finally finish the proof of Theorem \ref{thm1}.

First, we will analyze point $Z$. We know that x-coordinate of $Z$ is zero. Now we will split this into two cases:
\begin{enumerate}
    \item $x_A=0$: Then point $Z$ has coordinates $Z \; (0,\frac{y_A}{2})$, and we want to prove that line $y=\frac{y_A}{2}$ is tangent to all of our conic sections
    \item $x_A \neq 0$: Line $AB$ has slope $k=\frac{y_A}{x_A}$. Then, perpendicular bisector of $AB$ has slope $k_s = \frac{-x_A}{y_A}$, and goes through point $S \; (\frac{x_A}{2},\frac{y_A}{2})$. We can easily calculate that the constant term of perpendicular bisector of $AB$, and also y-coordinate of point $Z$ is $n_s = \frac{x_{A}^2 + y_{A}^2}{2y_A}$. So, $Z \; (0, \frac{x_{A}^2 + y_{A}^2}{2y_A})$, and we want to show that line $y=-\frac{x_A}{y_A}\cdot x + \frac{x_{A}^2 + y_{A}^2}{2y_A}$ is tangent to all of our conic sections.
\end{enumerate}

First, if we prove claims for one of the points, for example, $Z$, the same will hold for the other because all we did was without loss of generality.

Both of these cases are proven by substituting the coordinates of $Z$ into \eqref{eq:6} (or \eqref{eq:7}) and getting 0, and then substituting it into the formula of a tangent to a respective conic given a point on the conic and getting the desired line formula. 

Therefore the proof of Theorem \ref{thm1} is finally finished.

\section{Proof of Theorem \ref{thm2}}
First, we observe the parabola $g$ with focus $A$ and directrix $BC$. It satisfies the theorem, as it can be found for span 0, so $g\in \mathbb{F}$. $\mathbb{F}$ contains only one parabola since a parabola touching two lines in two given points is by definition B\'ezier curve of order 2, which is unique.
\begin{lemma}\label{lem0}
Let $L$ be a point. The circle with radius $|LA|$ and center $L$ intersects with $BC$ if and only if $L$ is outside of $g$ or $L\in g$.
\end{lemma}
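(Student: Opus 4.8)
The plan is to reduce Lemma \ref{lem0} to the elementary fact that a circle meets a line exactly when the distance from its center to the line does not exceed its radius, and then to recognize the resulting inequality as the defining inequality for the exterior (together with the boundary) of the parabola $g$.

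First I would recall the standard criterion: for a point $L$, a radius $\rho\ge 0$ and a line $\ell$, the circle of radius $\rho$ centred at $L$ has nonempty intersection with $\ell$ if and only if $d(L,\ell)\le\rho$, equality corresponding to the tangent case (which still counts as an intersection, in one point). Applying this with $\rho=|LA|$ and $\ell=BC$, the circle of radius $|LA|$ about $L$ meets $BC$ if and only if $d(L,BC)\le|LA|$.

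Next I would invoke the focus--directrix description of $g$: by construction $g=\{P:\ |PA|=d(P,BC)\}$. Consider the continuous function $f(P)=|PA|-d(P,BC)$. It vanishes exactly on $g$; the set $\{f\neq 0\}$ has exactly two connected components, one on each side of $g$; and $f(A)=-d(A,BC)<0$ (note $A\notin BC$), so the focus lies in the component where $f<0$, which is by definition the interior of $g$, while the other component, where $f>0$, is the exterior. Hence $d(L,BC)\le|LA|$, i.e. $f(L)\ge 0$, holds precisely when $L\in g$ or $L$ lies in the exterior of $g$. Combining with the previous paragraph gives the lemma.

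The only point that needs genuine care — and the step I expect to be the main (if minor) obstacle — is making precise what ``outside of $g$'' means and checking that the exterior of the parabola really is $\{f>0\}$. I would dispatch this by the continuity-plus-sign argument just sketched: $f$ is continuous, changes sign only across $g$, and the location of the focus $A$ in $\{f<0\}$ fixes which of the two regions is the interior and which is the exterior, so the sign of $f$ identifies the two sides unambiguously. With that settled, the rest is immediate and no computation is required.
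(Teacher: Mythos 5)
Your proposal is correct and follows essentially the same route as the paper: the paper's one-line proof likewise combines the circle--line intersection criterion $d(L,BC)\le |LA|$ with the focus--directrix characterization of $g$ (the paper's phrasing contains an apparent typo, writing line $AB$ where the directrix $BC$ is meant, and reversing the inequality). Your version merely makes explicit the sign argument identifying the interior of $g$ as the component containing the focus, which the paper leaves implicit.
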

\begin{proof}
$L$ is not strictly inside of $g$ if and only if $|LA|$ is less then or equal to the distance between $L$ and  line $AB$. This is equivalent to the statement.
\end{proof}
\begin{lemma}\label{lem1}
Let us observe a Cartesian coordinate system with the x-axis being parallel to the directrix of $g$ and its focus lying on the y-axis and $g$ has the leading coefficient greater than 0. Let $L$ be a point inside of $g$ and $P\in g$ such that $L$ has y-coordinate not greater than the y-coordinate of $P$. Let $e$ be a ray from $P$ parallel to $LP$ but not containing $L$. For a point $S$ on $e$ let $S'$ be an intersection of $g$ with line perpendicular to $e$ in $S$ which is closer to $S$. We say that $SS'$ is \textbf{the value of $S$}. Then the value is strictly increasing with respect to $|PS|$.
\end{lemma}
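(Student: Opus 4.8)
The plan is a direct coordinate computation that collapses ``the value is strictly increasing'' into a single algebraic inequality which, pleasantly, turns out to be equivalent to the hypothesis that $L$ lies inside $g$. First I would use the freedom in the stated frame to put $g: y=ax^{2}$ with $a>0$, write $L=(x_{L},y_{L})$ with $y_{L}>ax_{L}^{2}$ (so $L$ is inside) and $P=(x_{P},ax_{P}^{2})$ with $ax_{P}^{2}\ge y_{L}$, and --- reflecting in the $y$-axis if necessary --- assume $x_{P}>0$, which together with $x_{P}^{2}>x_{L}^{2}$ forces $x_{P}>x_{L}$. Set $\vec d=P-L=(d_{1},d_{2})$ with $d_{1}=x_{P}-x_{L}>0$ and $d_{2}=ax_{P}^{2}-y_{L}\ge 0$; then $e$ is the ray $S(u)=P+u\vec d$, $u\ge0$, the lines perpendicular to $e$ have direction $\vec n=(-d_{2},d_{1})$ with $|\vec n|=|\vec d|=:\rho$, and $|PS(u)|=\rho u$, so it suffices to show the value of $S(u)$ is strictly increasing in $u$.

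Substituting $S(u)+v\vec n$ into $y=ax^{2}$ yields a quadratic $Av^{2}+Bv+C=0$ in $v$ with $A=ad_{2}^{2}$, $B=-d_{1}-2ad_{2}(x_{P}+ud_{1})$ and $C=ad_{1}^{2}u^{2}+(2ad_{1}x_{P}-d_{2})\,u$; its solutions describe the two intersections $S+v_{i}\vec n$ of the perpendicular with $g$, so the value of $S$ is $\rho$ times the root of smaller modulus. I would then observe that for $u>0$ the point $S(u)$ lies strictly outside the convex region bounded by $g$, hence is not between the two intersection points, so $v_{1}$ and $v_{2}$ have the same sign and the value equals $\rho\,\dfrac{|B|-\sqrt{B^{2}-4AC}}{2A}$ (equal to $0$ at $u=0$, where $S=P\in g$). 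Crucially $A$ and $\rho$ do not depend on $u$, we have $C\ge 0$ for $u\ge 0$, and $B$ keeps a fixed sign since $-B\ge d_{1}+2ad_{2}x_{P}>0$.

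Now I would differentiate. Routine bookkeeping gives $BB'-2AC'=2ad_{2}\rho^{2}$ and $\frac{d}{du}|B|=2ad_{1}d_{2}$, so in the main case $d_2>0$
\[
\frac{d}{du}(\text{value})>0 \iff d_{1}\sqrt{B^{2}-4AC}>\rho^{2} \iff d_{1}^{2}(B^{2}-4AC)>\rho^{4}.
\]
Expanding, $B^{2}-4AC=q^{2}+4ad_{2}\rho^{2}u$ where $q:=d_{1}+2ad_{2}x_{P}$, and since the $u$-term is nonnegative it is enough to check $d_{1}q>\rho^{2}$; but
\[
d_{1}q-\rho^{2}=d_{2}(2ad_{1}x_{P}-d_{2})=d_{2}\bigl(a(x_{P}-x_{L})^{2}+(y_{L}-ax_{L}^{2})\bigr),
\]
which is strictly positive precisely because $a>0$ and $L$ is inside $g$. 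That settles $d_{2}>0$; the degenerate case $d_{2}=0$ (i.e.\ $y_{P}=y_{L}$, so $e$ is horizontal and every perpendicular is a vertical line meeting $g$ in a single point) I would handle directly, where the value equals $ad_{1}u(2x_{P}+ud_{1})$, obviously increasing in $u$.

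The hard part is not the algebra, which telescopes, but the two structural points that make the clean formula for the value legitimate: first, that the perpendicular actually meets $g$, i.e.\ $B^{2}-4AC>0$ --- which the same estimate $d_{1}^{2}(B^{2}-4AC)>\rho^{4}>0$ supplies --- and second, that $S'$ corresponds to the root of smaller modulus, which I would justify via convexity of the interior of $g$ together with the hypotheses that $L$ is inside and that $P$ sits no lower than $L$. Once these are in place, strict monotonicity of the value follows at once from the displayed equivalences, completing the proof of Lemma~\ref{lem1}.
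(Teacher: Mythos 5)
Your proof is correct, but it takes a genuinely different route from the paper's. The paper argues by direct comparison of two points: it fixes $X,Y$ on $e$ with $|PX|<|PY|$, introduces the slopes $k_1$ of $LP$, $k_2$ of the tangent to $g$ at $X'$, and $k_3$ of $X'Y'$, deduces $k_3>k_2>k_1\ge 0$ (after a sign normalization) from the monotonicity of the derivative of the quadratic, and then reads off $|YY'|>|XX'|$ from the relative positions of the projections of $X'$, $Y'$ onto $e$. You instead parametrize $e$ as $S(u)=P+u(P-L)$, realize the intersections of the perpendicular with $g$ as roots of an explicit quadratic in $v$, identify the value with $\rho\,\frac{|B|-\sqrt{B^2-4AC}}{2A}$, and differentiate; everything collapses to $d_1q>\rho^2$, i.e.\ $d_2\bigl(a(x_P-x_L)^2+(y_L-ax_L^2)\bigr)>0$, which is precisely the hypothesis that $L$ is interior to $g$. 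I verified the algebra ($C=ad_1^2u^2+(2ad_1x_P-d_2)u$, $B^2-4AC=q^2+4ad_2\rho^2u$, $BB'-2AC'=2ad_2\rho^2$, $d_1q-\rho^2=d_2(2ad_1x_P-d_2)$) and it checks out; you also correctly isolate and settle the two structural points the closed formula requires (positivity of the discriminant, and the same-sign claim for the roots, which follows either from your convexity remark or directly from $C>0$ for $u>0$), and you handle the degenerate horizontal case $d_2=0$. What your computation buys is a fully rigorous, self-contained argument with no sign case-split and an explicit identification of where ``$L$ inside $g$'' is used; the paper's argument is shorter and more geometric, but its final step (``we can finally get to the conclusion that $y>x$'') is left largely to the reader's picture.
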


\begin{proof}

Let $X$ and $Y$ be two distinct points on $e$, such that $|PX| < |PY|$. We define $X'$ and $Y'$ as in the statement of lemma (figure 4). Now, to prove lemma, we should prove that $x = |XX'| < |YY'| = y$. Let slopes of line $LP$, tangent to parabola at $X'$ and line $X'Y'$ be $k_1$, $k_2$ and $k_3$ respectively. First, we can observe that all of these slopes are well defined because neither of these lines is parallel to y-axis - line $LP$ can't be parallel to y-axis because then y-coordinate of $L$ would be greater than y-coordinate od $P$, tangent to the graph of quadratic function $y=ax^{2}+bx+c$ is never parallel to the y-axis, and line $X'Y'$ can't be parallel to y-axis because points $X'$ and $Y'$ are two different points on a graph of a function.

Line $LP$ must intersect parabola two times so point $P$ must be above line tangent to the parabola with slope $k_1$. Now we will assume, without loss of generality, that $k_1 \geq 0$. Point $X'$ is now also above line tangent to the parabola with slope $k_1$, and because both have non-negative x-coordinate, and derivative of this quadratic function is strictly increasing function, we know that $k_2 > k_1$. Since tangent to parabola intersects it only in one point, quadratic function with leading coefficient is increasing on positive numbers, and x-coordinate of $Y'$ is greater than x-coordinate of $X'$, we also know that line $X'Y'$ has a greater slope than tangent at $X'$ or $k_3 > k_2$. Now, because we know that projections of points $X'$ and $Y'$ on line $LP$ have greater x-coordinates than point of intersection of lines $LP$ and $X'Y'$, and $k_3 > k_2 \geq 0$, we can finally get to conclusion that $y > x$.

\begin{figure}[htbp]
  \centering

  \includegraphics[scale=0.4]{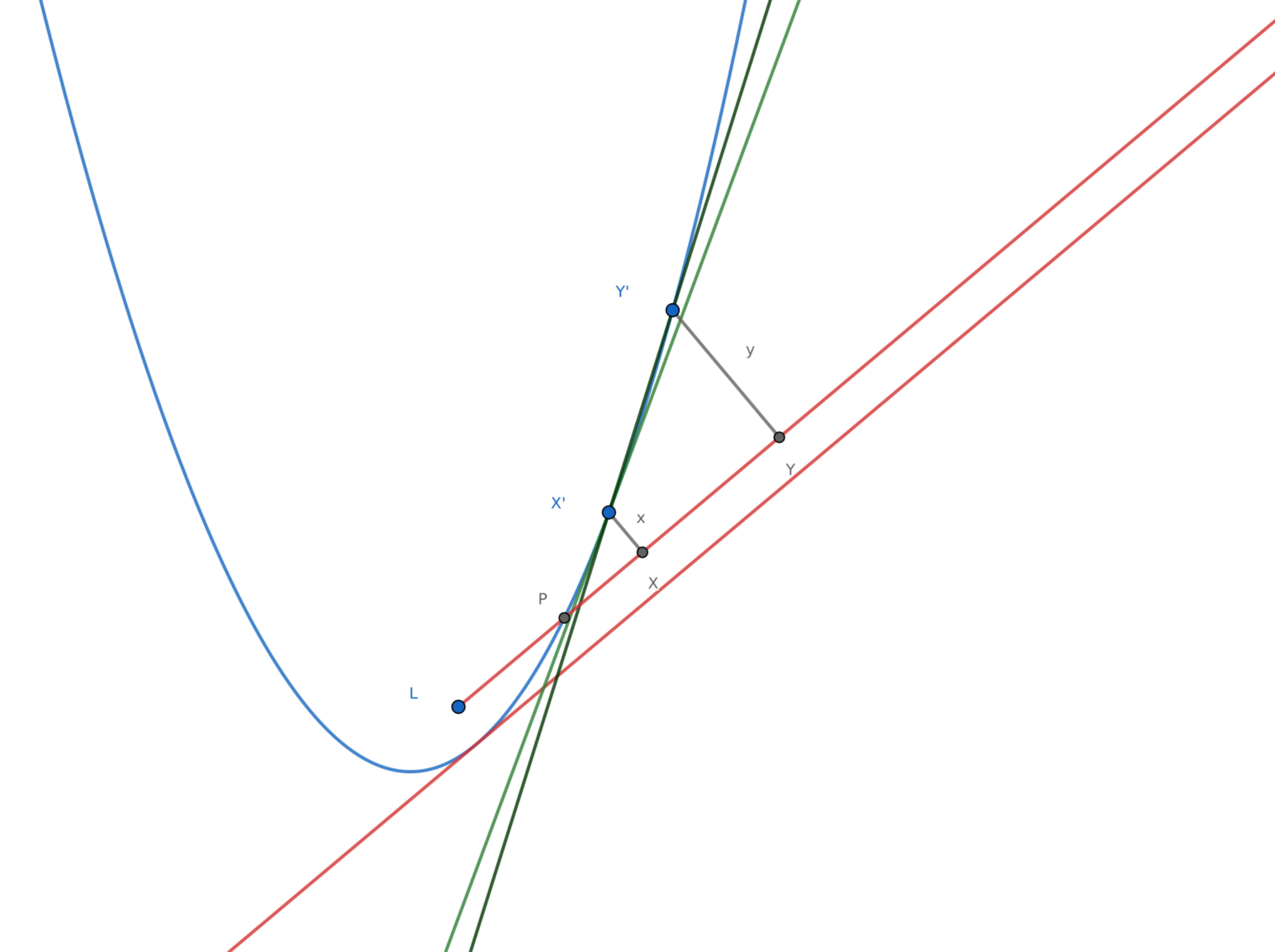}
  \caption{Lemma 4}
 \end{figure}
 
\end{proof}
\begin{lemma}\label{lem2}
If a hyperbola does not intersect with $g$, then no point of that hyperbola is inside of $g$.
\end{lemma}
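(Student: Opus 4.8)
The plan is to argue by contradiction and reduce the statement to a purely asymptotic fact about a single branch of the hyperbola. First I would pass to the coordinate system of Lemma \ref{lem1}: the $x$-axis parallel to the directrix $BC$ and the focus $A$ on the $y$-axis, so that $g$ becomes the graph $y=\alpha x^{2}+\gamma$ with $\alpha>0$. In these coordinates the complement of $g$ has exactly two connected components --- the open convex region $U=\{y>\alpha x^{2}+\gamma\}$, which contains the focus and is precisely the ``inside'' of $g$ in the sense used in Lemma \ref{lem0}, and the open region $W=\{y<\alpha x^{2}+\gamma\}$, the ``outside'' --- and both facts are immediate because $g$ is the graph of a function.

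Now suppose for contradiction that $H$ is a hyperbola with $H\cap g=\emptyset$ which contains a point $L_{0}$ inside $g$, i.e.\ $L_{0}\in H\cap U$. Let $B$ be the branch of $H$ (the connected component of $H$) through $L_{0}$. Since $B$ is connected, disjoint from $g$, and meets $U$, and since $U$ and $W$ are the two components of $\mathbb{R}^{2}\setminus g$, we must have $B\subseteq U$. On the other hand, a hyperbola has two distinct, non-parallel asymptote lines, at least one of which --- say $\ell:\,y=mx+c$ --- is non-vertical, and the branch $B$ has an end that escapes to infinity asymptotically to $\ell$. Along that end the running point satisfies $|x|\to\infty$ and $y=mx+c+o(1)$. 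Since $\alpha>0$ we have $\alpha x^{2}+\gamma-(mx+c+1)\to+\infty$, so for $|x|$ large enough $y<mx+c+1<\alpha x^{2}+\gamma$; hence these points of $B$ lie in $W$, contradicting $B\subseteq U$. Therefore no point of $H$ is inside $g$, as claimed.

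The only step that deserves more than a routine sentence, and which I expect to be the main obstacle, is the assertion that a branch of a hyperbola escapes to infinity tangentially to a non-vertical asymptote. This is transparent in the normal form $x^{2}/a^{2}-y^{2}/b^{2}=1$, whose right branch is the image of $t\mapsto(a\cosh t,b\sinh t)$, with the two ends $t\to+\infty$ and $t\to-\infty$ realising the two asymptotic directions $y=\pm(b/a)x$; an arbitrary hyperbola is carried to this form by an affine change of coordinates, and since an affine map sends a non-parallel pair of lines to a non-parallel pair, the two asymptotes cannot both become vertical. Note that Lemma \ref{lem1} itself is not used here: what does the work is the large-scale convexity of $U$ --- concretely, that $U$ contains no non-vertical line --- rather than the monotone ``value'' estimate of that lemma, so only its coordinate normalisation is borrowed.
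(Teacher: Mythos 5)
Your proof is correct, and it is leaner than the one in the paper. Both arguments share the same skeleton: assume a point of the hyperbola lies inside $g$, conclude by connectedness that an entire branch is trapped in the interior, and derive a contradiction from the fact that the branch must follow a non-vertical asymptote out to infinity. The difference is in how the contradiction is extracted. The paper first shows (via a case split on the position of the hyperbola's centre relative to the point where the asymptote ray meets $g$) either that the ray descends to arbitrarily small $y$-coordinates, or it invokes Lemma \ref{lem1} --- the monotonicity of the ``value'' $|SS'|$ --- to argue that $g$ recedes from the ray while the branch clings to it, forcing an escape. You bypass Lemma \ref{lem1} entirely: writing $g$ as $y=\alpha x^{2}+\gamma$ with $\alpha>0$ and the asymptotic end as $y=mx+c+o(1)$ with $|x|\to\infty$, the inequality $\alpha x^{2}+\gamma>mx+c+1$ for large $|x|$ immediately places far-out points of the branch outside $g$. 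This quadratic-beats-linear comparison is more elementary and self-contained, and it also removes the paper's case analysis on the centre's height; the only ingredients you need to justify carefully --- that each branch has an end asymptotic to each of the two (non-parallel, hence not both vertical) asymptotes, and that the complement of $g$ has exactly two components --- are exactly the ones you flag and handle via the normal form $(a\cosh t, b\sinh t)$. The trade-off is that the paper's route reuses Lemma \ref{lem1}, which it has already paid for; yours makes Lemma \ref{lem2} independent of it. (Purely cosmetically, you should rename your branch $B$ and region $U$, since $B$ and $U_{1},\dots,U_{4}$ already denote other objects in the paper.)
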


\begin{proof}
We provide proof by contradiction. Let's assume there is a point $S$ of some hyperbola $u$ lying inside of $g$. Since each branch of a hyperbola is continuous, one whole branch $u_1$ is fully inside of $g$(there may be points lying on $g$ as well). Now we can observe its asymptotes. Let $l_1$ and $l_2$ be rays of the asymptotes of $u$ with their beginning in the center of hyperbola $L$ such that they are closer to $S$. We center a coordinate system such that the y-axis is the axis of $g$, the x-axis is the directrix of $g$ and $g$ has the leading coefficient greater than 0. Since $u$ is a non-degenerate hyperbola, at least one of the $l_1$,$l_2$ is not parallel with the y-axis. Without loss of generality, let that ray be $l_1$. Then $l_1$ intersects $g$ in some point $P$. Then if $L$ has y-coordinate greater than $P$, the ray contains a point with arbitrarily small y-coordinate as we go up the ray, but the points of hyperbola get arbitrarily close to $l_1$ as we go up the ray by the definition of the asymptotes, and therefore we can find a point of $u_1$ which is outside of $g$ since $g$ has its lowest point, which is a contradiction since $u_1$ is continuous. Therefore $L$ must be under $P$. We know that as we go up the asymptote $u_1$, the points of hyperbola get strictly closer to $u_1$, but by lemma \ref{lem1}, we know that as we go up the ray, the points of $g$ get strictly further away from it, and so there will be a point of $u_1$ which is closer to $l_1$ than it is to its closest point on $g$, from which follows it is outside of $g$. Again, we get a contradiction because of the continuity.
\end{proof}

Now we go back to the proof of Theorem \ref{thm2}. By lemma \ref{lem0} we know that every point which is outside of the parabola has representation in Theorem \ref{thm1}. Only the points in the parabola do not, but by lemma \ref{lem2} we know that none of these points belong to any parabola or hyperbola of $\mathbb{F}$. Therefore the proof of Theorem \ref{thm2} is completed.

\section{Applications}

Now we will introduce some well-known already solved problems. These problems are generally trivially solved using Pascal's theorem and some Euclidean geometry, but we will provide alternative solutions using our recently proved theorems. Furthermore, none of these problems rely on metric properties of conic sections in the Euclidean plane, so we justifiably hope that it has some real applications, given that even powerful ideas from projective geometry do not generally solve metric problems involving conics.

Just before we start, we introduce one final tool.
\begin{definition}
Let $p$ and $q$ be distinct lines intersecting at $O$ and let $Z$ and $V$ be points on $p$ and $q$ respectively, $Z,V\neq O, |OZ|\neq |OV|$. The pencil of conics touching $p$ and $q$ in $Z$ and $V$ is said to be an \textit{$\chi$-type pencil}. $p$ and $q$ are then sides of the pencil, $O$ is the vertex of the pencil, and $Z, V$ are contact points. Line $ZV$ is called the critical line of the pencil. 
\end{definition}
\begin{remark}
By Theorem \ref{thm2}, every $\chi$-type pencil contains a unique parabola.
\end{remark}
\begin{definition}
Let $\mathbb{F}$ be an $\chi$-type pencil.The sides of $F$ divide the plane into 4 regions, we say that the region containing parabola is $R_1$, and we label the other three regions in clockwise order $R_2,R_3$ and $R_4$. Furthermore, we divide $R_1$ without the parabola into 4 regions: $U_1$ is the set of all points inside of parabola, $U_2$ is the set of all points $K$ such that $K$ and $C$ lie on opposite sides of the line $BZ$, $U_3$ is the set of all points $K$ such that $K$ and $B$ lie on opposite sides of the line $CV$. Let $g$ be the parabola of $\mathbb{F}$ , the $U_4$ is the remaining region, $R_1\backslash (U_1\cup U_2 \cup U_3 \cup g)$
\end{definition}
\begin{proposition} \label{prop}
Let $\mathbb{F}$ be an $\chi$-type pencil, let $g$ be its parabola and let $A$ its focus. Let $B$ and $C$ be projections of $Z$ and $V$ onto the directrix of $g$, respectively.Let $m$ be the segment $BC$ and let $m'$ be line $BC$ without $m$. Let $E$ be a point and let $k$ be the circle centered at $E$ with radius $|EA|$. Then the following holds:
\begin{enumerate}
    \item $E\in R_3$ if and only if $k$ contains $m'$
    \item $E\in (R_2\cup R_4)$ if and only if $k$ intersects with both $m$ and $m'$.
    \item If $E\in U_1$, then $k$ has no intersection points with $m$ nor $m'$
    \item If $E\in g$, then $k$ has one point of intersection with $m\cup m'$
    \item If $E\in (U_2\cup U_3)$, then $k$ has two points of intersection with $m'$
    \item If $E\in U_4$, then $k$ has two points of intersection with $m$.
\end{enumerate}
\end{proposition}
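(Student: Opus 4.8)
The plan is to reduce every part of this proposition to Lemma 4.2 (the parabola criterion) together with a careful bookkeeping of which of the two intersection points of the circle $k$ with line $BC$ lands on the segment $m$ and which on its complement $m'$. The key observation is that the circle $k$ centered at $E$ with radius $|EA|$ is precisely the circumscribed circle of a triangle $\triangle A E' E''$ where $E', E''$ are the intersections of $k$ with line $BC$; moreover, by the very setup of Theorem \ref{thm1}, once $k$ meets $BC$ in two real points, $E$ is the circumcenter of a triangle of type $\triangle A M_a(\cdot) M_a(\cdot)$, $\triangle A M'_a(\cdot) M'_a(\cdot)$, or $\triangle A M_a(\cdot) M'_a(\cdot)$, according to whether both feet lie on $m$, both on $m'$, or one on each. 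So the six cases of the proposition correspond exactly to: (iii) $k \cap BC = \emptyset$; (iv) $k$ tangent to $BC$; (i) both feet on $m'$ with $E$ on the far side; (v)/(vi) both feet on the same piece; (ii) one foot on each piece.

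First I would set up the coordinate system of Section 3: $B=(0,0)$, $C=(1,0)$, $A=(x_A,y_A)$, so that $g$ is the parabola with focus $A$ and directrix $BC$ (the $x$-axis), and $Z,V$ are the points on the verticals through $B,C$ on the perpendicular bisectors of $AB$, $AC$; note $B,C$ are exactly the projections of $Z,V$ onto the directrix, matching the hypotheses of the proposition. The sides of the pencil are the two perpendicular bisectors, meeting at the circumcenter $O$ of $\triangle ABC$; the four regions $R_1,\dots,R_4$ and the subregions $U_1,\dots,U_4$ are then explicit unions of half-planes and the parabola's interior/exterior. Part (iii) and part (iv) are then immediate from Lemma \ref{lem0}: $E \in U_1$ means $E$ is strictly inside $g$, so $|EA|$ is strictly less than $\mathrm{dist}(E,BC)$ and $k$ misses $BC$; $E\in g$ gives the tangency, one intersection point. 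These require essentially no computation.

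The substance is in parts (i), (ii), (v), (vi), where $E$ is strictly outside $g$ and $k$ meets line $BC$ in two distinct points $F_1, F_2$; here I must determine the sign pattern of $F_1, F_2$ relative to the segment $[0,1]$. The clean way is this: the two feet are the roots of the quadratic $(x - x_A)^2 + y_A^2 = |EA|^2$ after substituting $y=0$, i.e. $x^2 - 2x_A x + (x_A^2 + y_A^2 - |EA|^2) = 0$ — wait, more precisely the feet satisfy $\lvert F_i - E\rvert = \lvert EA\rvert$, giving $F_i = x_E \pm \sqrt{|EA|^2 - y_E^2}$. Both feet lie in $(0,1)$ iff $0 < F_1$ and $F_2 < 1$, i.e. iff $A$ lies strictly between the circles of ``$B$-power'' and ``$C$-power'': concretely $E$ lies in the region bounded by the verticals $x=0$, $x=1$ only in a controlled way — and the boundary locus where a foot passes through $B$ is exactly the perpendicular bisector of $AB$ (points equidistant from $A$ and $B$), and where a foot passes through $C$ is the perpendicular bisector of $AC$. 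So the sides of the pencil are precisely the walls across which a foot crosses an endpoint of the segment. Tracking which side of each perpendicular bisector $E$ lies on then pins down, for each of the regions $U_2, U_3, U_4, R_2, R_3, R_4$, exactly how many feet fall in $m$ versus $m'$: e.g. $E \in R_3$ means $E$ is separated from both $B$ and $C$ appropriately so that both feet leave the segment on the same side, giving (i); $R_2 \cup R_4$ is where exactly one foot has crossed out, giving one foot in $m$ and one in $m'$, hence (ii); and within $R_1$, the subregions $U_2, U_3$ are the ``one wall crossed'' pieces relative to $Z$-side resp.\ $V$-side giving both feet in $m'$, while $U_4$ is the piece where both feet stay in $m$, giving (v) and (vi).

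The main obstacle I anticipate is the orientation bookkeeping: being sure that the clockwise labeling $R_2, R_3, R_4$ and the definitions of $U_2$ (opposite side of $BZ$ from $C$) and $U_3$ (opposite side of $CV$ from $B$) line up with ``foot crosses $B$'' versus ``foot crosses $C$'' in the correct sense, and that this is robust to the two cases $|AB|<|AC|$ and $|AB|>|AC|$ (equivalently, the parabola opening and the position of $O$ relative to segment $BC$). I would handle this by doing the analysis once for a normalized configuration — say $x_A \in (0,1)$, $y_A>0$, $|AB|<|AC|$ — reading off all six count statements from the explicit half-plane inequalities, and then invoking the symmetry $B \leftrightarrow C$, $Z \leftrightarrow V$, $U_2 \leftrightarrow U_3$ (which swaps the roles but preserves the segment $m$ and its complement $m'$) to cover the remaining case. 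Parts (v) and (vi) additionally need that both feet are \emph{real and distinct}, which is exactly ``$E$ strictly outside $g$'' from Lemma \ref{lem0}, so there is nothing extra to check there beyond the region membership.
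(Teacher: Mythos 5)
Your treatment of parts (3), (4), (1) and (2) matches the paper's in substance: the paper likewise disposes of (3) and (4) by Lemma \ref{lem0} and the definition of the parabola, and proves (1) and (2) by observing that the sides of the pencil are the perpendicular bisectors of $AB$ and $AC$, so the region containing $E$ determines whether $B$ and $C$ lie inside the circle $k$ and hence (by convexity of $k$) how the chord $k\cap BC$ sits relative to the segment $m$. Your identification of the perpendicular bisectors as the ``walls across which a foot crosses an endpoint'' is exactly this observation.

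The gap is in your handling of (5) versus (6). All of $U_2$, $U_3$, $U_4$ lie in $R_1$, hence on the same side of both perpendicular bisectors; for every $E$ in $U_2\cup U_3\cup U_4$ both $B$ and $C$ lie outside $k$, so the sign data you propose to track is identical across these three regions and cannot ``pin down how many feet fall in $m$ versus $m'$''. Knowing only that $B$ and $C$ are outside $k$ leaves two possibilities: both feet in $m$, or both feet in $m'$ on the same side of the segment. Distinguishing them requires a third family of walls that you never introduce, namely the lines $BZ$ and $CV$ (equivalently, the position of the foot of the perpendicular from $E$ to the directrix relative to the segment $BC$): the midpoint of the chord $k\cap BC$ is that foot, so the chord lies in $m$ only if that foot does. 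This is precisely how the paper closes these cases --- it notes that the perpendicular bisector of the chord passes through $E$ and would have to meet $m$, contradicting $E$ and $C$ lying on opposite sides of $BZ$ when $E\in U_2$ (symmetrically for $U_3$, and the reverse elimination for $U_4$). Your formula $F_i=x_E\pm\sqrt{|EA|^2-y_E^2}$ already contains this fact, and $U_2,U_3,U_4$ are defined by exactly these lines, so the repair is short; but as written the argument for (5) and (6) does not go through.
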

\begin{proof}

\begin{enumerate}
   
    \item \label{it2} Since the side of $\mathbb{F}$ containing $Z$ is the perpendicular bisector of $AB$, and by definition of $R_3$, $B$ and $E$ lie on the same side of it, $|EB|< |EA|$ and therefore $k$ contains $B$. Analogously, $k$ contains $C$, and by the convexity of the circle, $m$ is fully inside of $k$. By lemma \ref{lem0}, $k$ intersects $m\cup m'$ but it contains $m$, so both points of intersection lie in $m'$. The opposite is also true, if both intersection points lie outside of $m$, then the segment containing these points also contains $B$ and $C$, and therefore the circle contains them as well, which is equivalent to $B$ and $E$ being on the same side of $OZ$, where $O$ is the center of $\mathbb{F}$ and $C$ and $E$ lying on the same side of $OB$, which is equivalent to $E$ being in $R_3$  
    \item \label{it1} Analogously to \ref{it2}, we know that $E\in (R_2\cup R_4) \Leftrightarrow( |AE|>|EB| $ and ($|AE|<|EC|)$ and $|AE|<|AB|$ or $|AE|>|EC|$) . This is equivalent to the circle containing exactly one of the points $B, C$, which is equivalent to $k$ intersects both $m$ and $m'$.   
    \item This follows from lemma $\ref{lem0}$
    \item This is trivial since it follows from the definition of a parabola
    \item \label{it3} We already proved that only points $E$ for which circle $k(E,|EA|)$ intersects $m'$ two times and on different sides of point $B$ are points from $R_3$ (\ref{it2}), and that only points $E$ for which circle $k(E,|EA|)$ intersects $m$ once and $m'$ once are points from $R_2 \cup R_4$ (\ref{it1}). This leaves only two options for points $E$ from $U_2$ ($U_3$ case is done analogously) - circle $k(E,|EA|)$ either intersects $m'$ two times and intersection points are on the same side of point $B$ or it intersects $m$ two times. We will prove that the second case is impossible by contradiction. Assume that circle $k$ intersects $m$ two times. Then, the perpendicular bisector of a segment which endpoints are two intersection points would also intersect $m$. This is impossible, because $E$ lies on that perpendicular bisector, and points $E$ and $C$ should lie on different sides of the line $BZ$. Contradiction. That means that circle $k$ intersects $m'$ two times. (Figure 6)
    \item Let $E$ be a point from $U_4$, and $k(E,|EA|)$ be a circle. Similarly as in (\ref{it3}) we know that there are only two cases, circle $k$ either intersects $m$ two times, or it intersects $m'$ two times, and these two points are on the same side of point $B$. We will now prove, again by contradiction, that the second case is impossible. Assume that circle $k$ intersects $m'$ two times, and, without loss of generality, let these two points be on the different side of point $B$ than point $C$. Similarly as earlier, we now know that point $E$ must lie on line $r$ perpendicular to line $BC$, such that the intersection of $r$ and $m'$ and point $C$ are on the different sides of point $B$. This would mean that segment $CE$ does intersect with line $BZ$, which is impossible because point $E$ isn't from $U_2$. Contradiction. That means that circle $k$ intersects $m$ twice.
\end{enumerate}
\end{proof}

\begin{figure}[htbp]
  \centering

  \includegraphics[scale=0.4]{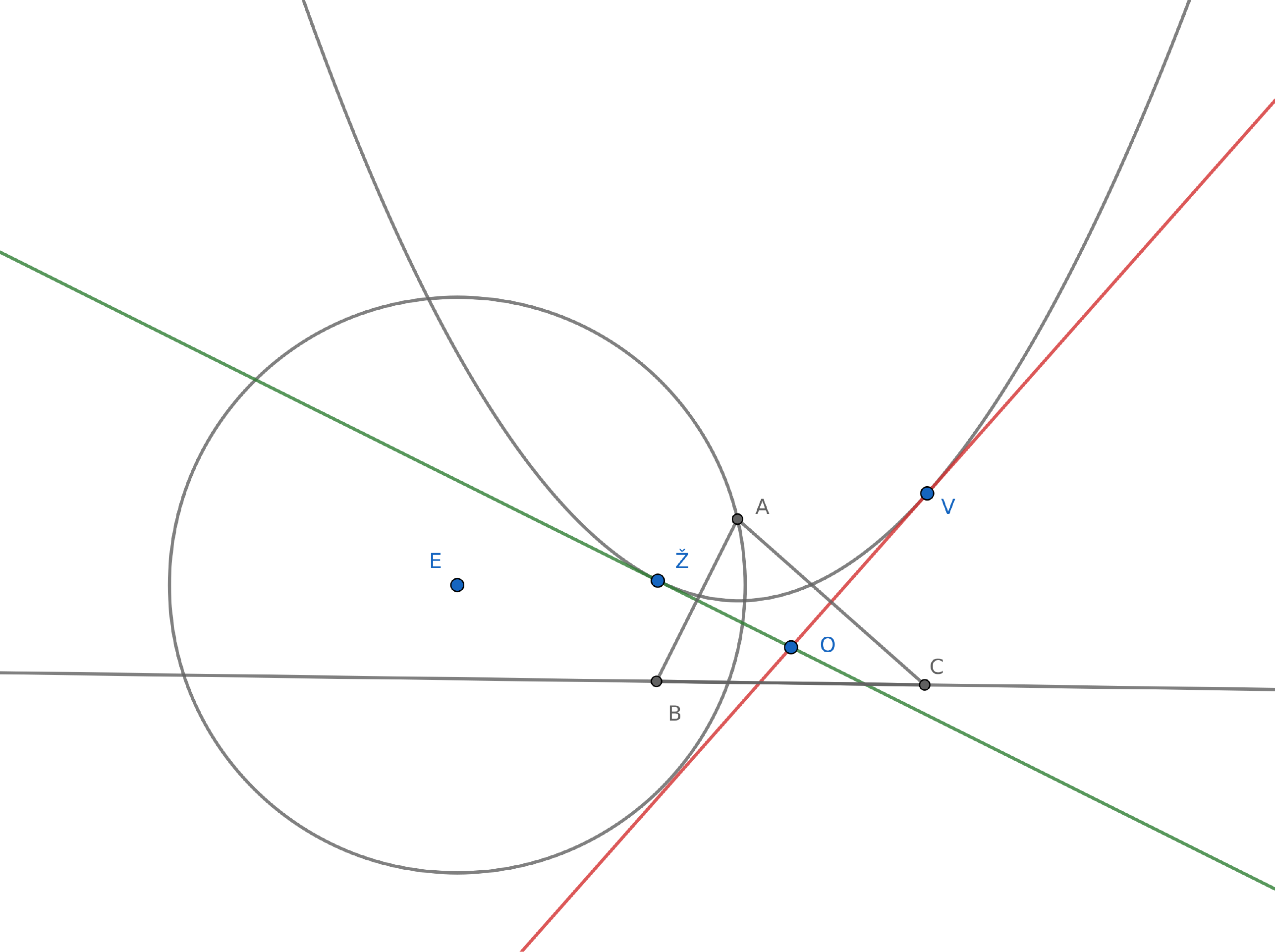}
  \caption{Proposition 6(2)}
 \end{figure}
 
 \begin{figure}[htbp]
  \centering

  \includegraphics[scale=0.4]{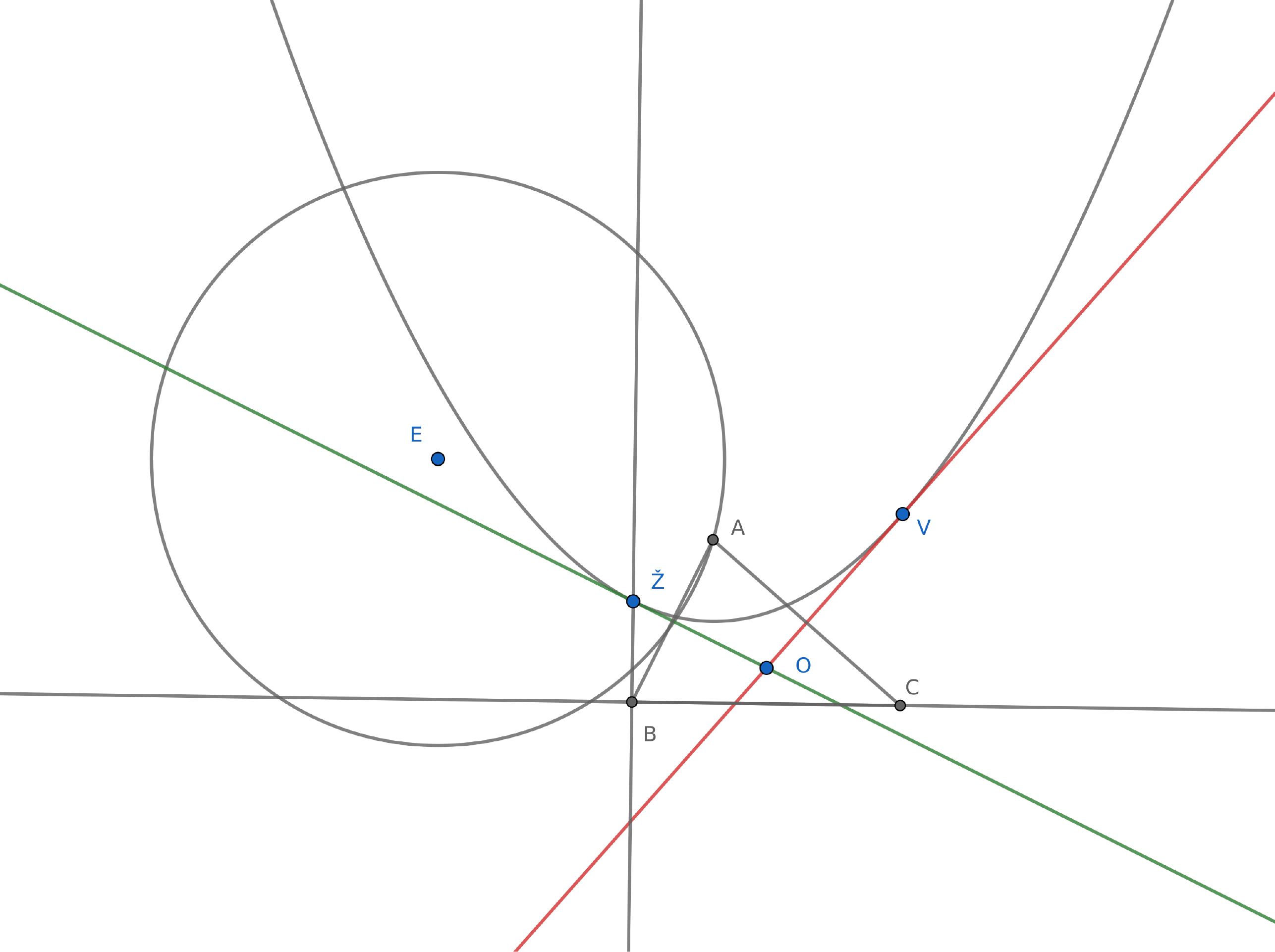}
  \caption{Proposition 6(5)}
 \end{figure}

\begin{theorem}[Classification of conic sections in $\chi$-type pencils]
Let $\mathbb{F}$ be an $\chi$-type pencil, $W$ be a point and $u$ be a conic in $\mathbb{F}$  passing through $W$. Then the following holds:
\begin{enumerate}
    \item If $W$ is on the parabola, then $u$ coincides with it
    \item\label{el} $u$ is an ellipse if and only if $W$ is inside of the parabola and not on the segment $ZV$
    \item if $W$ belongs to a side of the pencil, then $u$ is a degenerate hyperbola, $p\cup q$
    \item if $W$ belongs to $(R_2\cup R_4)\backslash ZV$, then $u$ is a hyperbola. We call this a \textit{side hyperbola}
    \item if $W$ belongs to $R_1\cup R_3$ and $W$ is strictly outside of the parabola, then $u$ is a hyperbola. We call this a \textit{straight hyperbola}
\end{enumerate}
\end{theorem}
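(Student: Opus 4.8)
The plan is to leverage Theorem~\ref{thm2} together with Proposition~\ref{prop} and some elementary facts about pencils of conics. The key structural observation is that an $\chi$-type pencil is a one-parameter linear system: if $g$ denotes the parabola and $\ell = p \cup q$ the degenerate member supported on the two sides, then every conic $u \in \mathbb{F}$ is a linear combination $\lambda g + \mu \ell$, and consequently the pencil is parametrized so that through each point $W$ of the plane (not on $Z$, $V$, or the base locus) passes exactly one conic of $\mathbb{F}$. This is what makes the classification well-posed: I would first establish this ``exactly one conic through each point'' fact, handling separately the contact points $Z,V$ (through which every member passes) and points on the sides (through which the degenerate member and, a priori, possibly others pass).

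Next I would pin down the type of each member by a continuity/connectedness argument on the parameter. Statement (1) is immediate: a conic sharing a point with $g$ and already sharing the two tangency data $(Z,p)$ and $(V,q)$ with $g$ has too much contact to be distinct, so $u = g$. For (3), if $W$ lies on a side, then the degenerate hyperbola $p \cup q$ passes through $W$, and by the uniqueness just established $u = p \cup q$. For the remaining cases I would use the affine classification via the line at infinity: a member is an ellipse, parabola, or hyperbola according to whether it meets the line at infinity in $0$, $1$ (doubled), or $2$ real points. Since $g$ is the unique parabola (this is the Remark after Theorem~\ref{thm2}, itself resting on the B\'ezier-curve uniqueness argument in the proof of Theorem~\ref{thm2}), every other nondegenerate member is an ellipse or a hyperbola, and which one it is can only change as the parameter crosses the parabola in the pencil. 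This shows the plane minus $g$ minus the sides splits into open regions, on each of which the type is constant; it remains to identify those regions with the ones in the statement.

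To do the identification I would invoke Proposition~\ref{prop} and Lemma~\ref{lem2}. For (2): if $W$ is strictly inside $g$ and $W \notin ZV$, then by Lemma~\ref{lem2} the conic $u$ through $W$ cannot be a hyperbola (a hyperbola disjoint from $g$ has no point inside $g$, and one meeting $g$ would, being in $\mathbb{F}$, have to share four contact conditions with $g$ and hence degenerate or coincide) nor the parabola (by (1)), nor degenerate (that forces $W$ on a side), so $u$ is an ellipse; conversely an ellipse is bounded, hence cannot reach the sides or infinity, so its points lie in $R_1$, and since it cannot cross $g$ it lies inside $g$ --- and the segment $ZV$ is excluded because $ZV$ is the critical line, along which only the degenerate member lives. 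For (4) and (5): a point $W$ strictly outside $g$ and not on a side lies in $R_1 \cup R_3$ (outside $g$) or in $R_2 \cup R_4$; in either case the member through $W$ is not the parabola, not an ellipse (an ellipse stays inside $g$ by the previous paragraph), not degenerate (not on a side), hence a hyperbola. The labels ``straight'' versus ``side'' are then just names for the two geometric configurations, and the separation matches the region decomposition; Proposition~\ref{prop}, which already describes exactly how the circle around $W$ meets $BC$ in each of $R_2\cup R_4$ versus $R_1\cup R_3$, confirms that these are genuinely the two distinct hyperbola regimes.

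The main obstacle I expect is \emph{rigorously excluding hyperbolas from the interior of $g$ and ellipses from the exterior}, i.e.\ the step that makes (2) an ``if and only if.'' Lemma~\ref{lem2} handles the hyperbola direction for hyperbolas \emph{disjoint} from $g$, but I must also rule out a member of $\mathbb{F}$ that meets $g$: the clean way is to note that any two distinct conics of a pencil meet in the same base locus, which here is the scheme-theoretic intersection of $g$ and $p\cup q$ --- namely the length-two ``double'' contact at $Z$ and at $V$ --- so a member distinct from $g$ meets $g$ \emph{only} at $Z$ and $V$ (with the prescribed tangency) and nowhere else; hence it cannot cross into the interior except through $Z$ or $V$, and a hyperbola branch entering the interior would have to exit, contradicting Lemma~\ref{lem2}. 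Making this base-locus argument precise (and dealing with the real-versus-complex subtlety that $Z,V$ might be the only real base points) is the delicate part; everything else is bookkeeping of regions already set up in the definitions preceding the theorem.
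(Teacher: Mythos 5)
Your proposal is correct and follows essentially the same route as the paper's proof: uniqueness of the parabola for (1), Lemma \ref{lem2} together with convexity for the two directions of (2), the extra intersection with a side for (3), and elimination for (4) and (5). The one genuine addition is your base-locus argument, and it is a worthwhile patch: the paper invokes Lemma \ref{lem2} even though every member of $\mathbb{F}$ meets $g$ at $Z$ and $V$ (so that lemma's hypothesis ``does not intersect with $g$'' is never literally satisfied by a member of the pencil), whereas your observation that a member distinct from $g$ meets $g$ only at $Z$ and $V$ with even contact --- hence never crosses $g$, so any branch carrying an interior point forces an unbounded arc inside $g$, to which the asymptote argument of Lemma \ref{lem2} still applies --- supplies exactly the missing justification.
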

\begin{proof}

\begin{enumerate}
\item We know the parabola is unique.
\item If $u$ is inside of parabola it has to be an ellipse since by lemma \ref{lem2} it cannot be a hyperbola. If $u$ is an ellipse, since it is convex and contains $Z$ and $V$, it also contains the whole segment $ZV$. The same holds for the parabola, and therefore they have common interior points. Since they do not intersect and the parabola is infinite, the parabola contains $u$.
\item $u$ touches $p$ in $Z$ and on an additional point, so it has to contains $p$ and $q$ analogously. Since it is a conic, it has to be degenerate conic $p\cup q$
\item \label{hiptriv}$u$ can neither be an ellipse(by (\ref{el})), nor a parabola, so it has to be a hyperbola
\item Analogously to \ref{hiptriv}
\end{enumerate}
\end{proof}

\begin{figure}[htbp]
  \centering

  \includegraphics[scale=0.4]{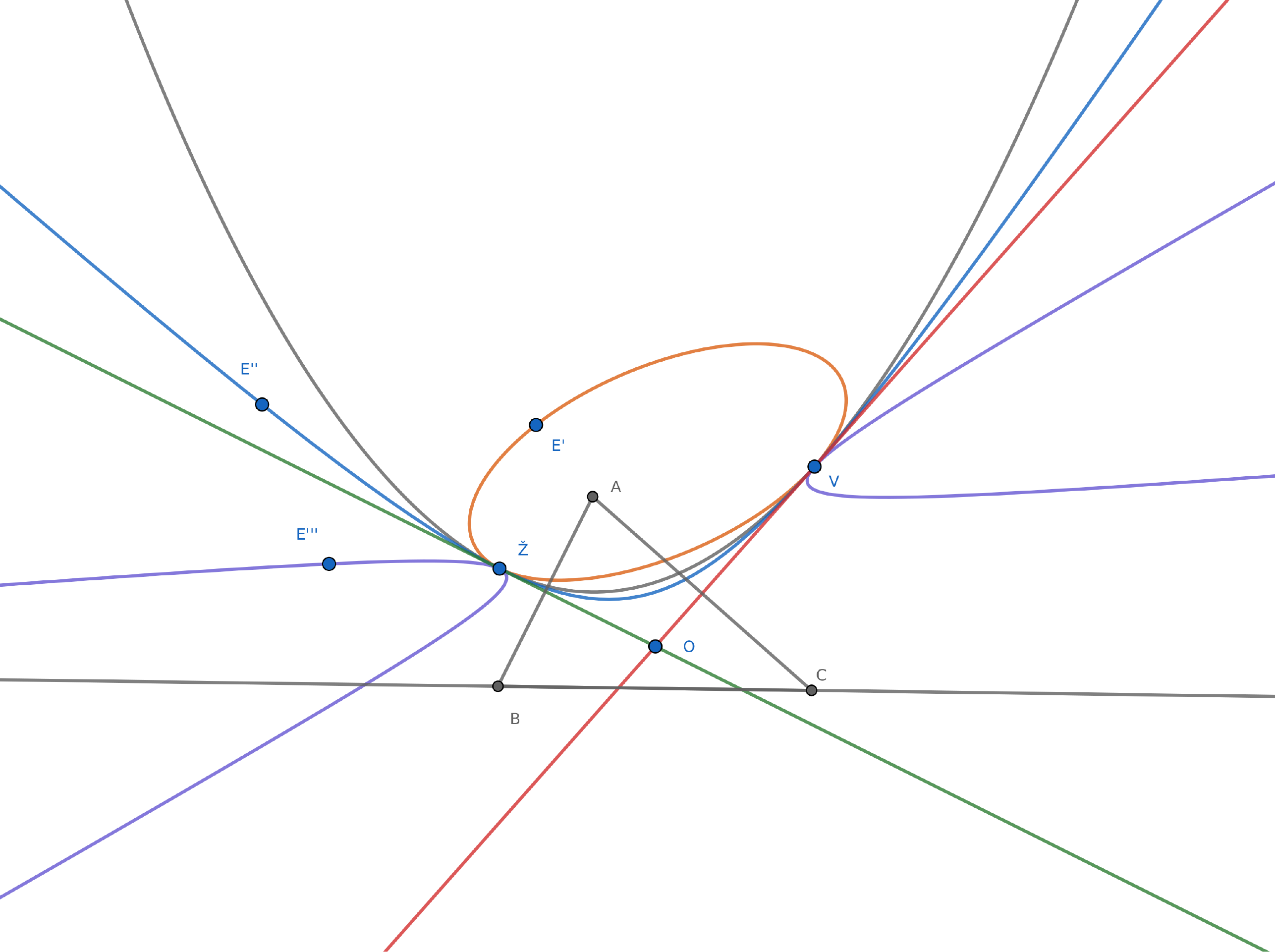}
  \caption{Classification theorem}
 \end{figure}

Now that we have all the theory, we can ask some natural questions:
\begin{problem}
Given points $X$ and $Y$ and let $\mathbb{F}$ be an $\chi$-type pencil such that neither of points $X$ nor $Y$ lie on the critical line of $\mathbb{F}$ and neither of $X$, $Y$ lie on the sides of $\mathbb{F}$. When do $X$ and $Y$ lie on the same conic in $\mathbb{F}$?
\end{problem}

\begin{solution}
First, we will use the classification theorem to eliminate all cases in which the curves determined by $X$ and $Y$ are of a different type. The remaining cases are:
\begin{enumerate}
    \item Both $X$ and $Y$ lie on the parabola:
    
        This case is trivial since parabola is unique in $\mathbb{F}$.
  
    \item Both $X$ and $Y$ lie in $R_2\cup R_4$ 
    
        By proposition \ref{prop} we know that circles $k_1 (X,|XA|)$ and $k_2 (Y,|YA|)$ intersect line $BC$ once in $m$ and once in $m'$. Let circle $k_1$ intersects with $m$ at point $X_1$ and with $m'$ at point $X_2$, and circle $k_2$ intersects with $m$ at point $Y_1$ and with $m'$ at point $Y_2$. We will now look at the ratios, $x_1 = \frac{|BX_1|}{|X_1C|}$, $x_2=\frac{|BX_2|}{|X_2C|}$, $y_1 = \frac{|BY_1|}{|Y_1C|}$, $y_2 = \frac{|BY_2|}{|Y_2C|}$. By \ref{thm1}, we know that points $X$ and $Y$ lie on the same conic section from $\mathbb{F}$ if and only if $| log_{\frac{c}{b}}x_1 - log_{\frac{c}{b}}x_2 | =  | log_{\frac{c}{b}}x_1 - log_{\frac{c}{b}}x_2 |$, which is equivalent to: $\frac{x_1}{x_2} \cdot \frac{y_1}{y_2} = 1$ or $\frac{x_1}{x_2} \cdot \frac{y_2}{y_1} = 1$. We can check both of these, and see if one of these two equations is true, which would tell us that points $X$ and $Y$ are on the same conic. If that isn't the case, then they don't lie on the same conic.
        
        \item Both $X$ and $Y$ lie in $U_2\cup U_3\cup U_4\cup R_3$:
    
        By proposition \ref{prop}, we know that the circle $k_1(X,|XA|)$ intersects either $m$ or $m'$ twice. The same holds for $k_2(Y,|YA|)$. Therefore the curves $u_1$ and $u_2$ containing $X$ and $Y$ respectively , by Theorem \ref{thm1}, both belong to type (1)(or (2), but it was proven that these are identical). The problem becomes equivalent to checking if $Sp(u_1)$ is equal to $Sp(u_2)$. The span is calculated analogously to the previous case
    \item Both $X$ and $Y$ lie in $U_1$
    
    We know these are ellipses by the classification theorem. To do this case we will use polar/pole transformation with respect to the parabola. If we can find a point that lies on the dual conic with respect to parabola for both $X$ and $Y$, then we have successfully reduced the problem to the previous two cases. For this, we will use the following lemma:
    
    \begin{lemma}
        Let $\mathbb{F}$ be a $\chi$-type pencil. Let $u\in \mathbb{F}$ be an ellipse inside of the parabola. and let $P$ be a point on it. Let $E$ and $F$ ($E\neq F$) be intersections of the tangent to $u$ in $P$ with the parabola, and let $G$ be the intersection of tangents to the parabola in $E$ and $F$. Then $P$, $O$, and $G$ are collinear. 
    \end{lemma}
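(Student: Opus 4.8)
The plan is to prove the lemma by pole--polar duality with respect to the parabola $g$, after recording the projective structure of the pencil $\mathbb{F}$.

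First I would record two structural facts. (i) Since $g\in\mathbb{F}$ and $p,q$ are the sides of $\mathbb{F}$, the tangent to $g$ at $Z$ is $p$ and the tangent to $g$ at $V$ is $q$; hence the vertex $O=p\cap q$ is exactly the pole of the critical line $ZV$ with respect to $g$. (ii) Any member $u\in\mathbb{F}$ passes through $Z$ and $V$ and shares the tangent line there with $g$, so $u$ and $g$ meet with intersection multiplicity $2$ at each of $Z,V$; by B\'ezout this accounts for all four intersection points, so $u$ and $g$ are bitangent along $ZV$. Consequently $\mathbb{F}$ is the pencil $\{\,g+\lambda\,\ell^{2}=0\,\}$, where $\ell=0$ is an equation of the line $ZV$ (both sides are one-parameter linear systems and we have just shown one contains the other). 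If one prefers to compute, take coordinates with $g:\,y=x^{2}$, so $Z=(z,z^{2})$, $V=(v,v^{2})$, $\ell:\,y-(z+v)x+zv=0$, $O=\bigl(\tfrac{z+v}{2},\,zv\bigr)$, and each claim below becomes a short verification.

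Now write $g$ by a symmetric matrix $M$ (invertible, a parabola being a smooth conic) and keep $\ell$ also for the covector of the line $ZV$, so the member $u=u_{\lambda}$ has matrix $M+\lambda\,\ell\ell^{\mathsf{T}}$. The hypothesis is that the line $EF$ is the tangent to $u_{\lambda}$ at $P$, that is, the polar of $P$ with respect to $u_{\lambda}$; its covector is
\[
(M+\lambda\,\ell\ell^{\mathsf{T}})\,P \;=\; MP+\lambda\,(\ell^{\mathsf{T}}P)\,\ell .
\]
Since $E,F\in g$, the point $G$ (meet of the tangents to $g$ at $E$ and $F$) is the pole of the chord $EF$ with respect to $g$, obtained by applying $M^{-1}$:
\[
G \;=\; M^{-1}\!\bigl(MP+\lambda\,(\ell^{\mathsf{T}}P)\,\ell\bigr)\;=\;P+\lambda\,(\ell^{\mathsf{T}}P)\,M^{-1}\ell .
\]
By fact (i), the vector $M^{-1}\ell$ represents the pole of $ZV$, i.e.\ the point $O$. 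Hence $G=P+\mu\,O$ in homogeneous coordinates for a suitable scalar $\mu$, so $G$ lies on the line $PO$; that is, $P$, $O$, $G$ are collinear.

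The content is in those two identities; what remains is bookkeeping. The step I expect to need the most care is claim (ii), that $\mathbb{F}=\{g+\lambda\ell^{2}\}$, because its degenerate members $\ell^{2}$ and $p\cup q$ must be accounted for when passing between ``pencil of bitangent conics'' and the given $\chi$-type description; the coordinate computation is a safe fallback. The degenerate positions of $P$ are harmless: if $\ell^{\mathsf{T}}P=0$ then $P$ lies on the line $ZV$, hence $P\in\{Z,V\}$, and then the tangent to $u$ at $P$ is $p$ or $q$, which touches $g$ only at that one point, forcing $E=F$ --- a case excluded by the hypothesis $E\neq F$. So in every configuration covered by the lemma, $P$, $O$ and $G$ are three genuine points on one line.
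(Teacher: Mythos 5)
Your proof is correct, and it takes a genuinely different route from the paper's. The paper works synthetically with polars with respect to the parabola alone: setting $X=EF\cap ZV$, it shows that $O$, $G$ and $P$ all lie on the polar of $X$ --- $O$ and $G$ by pole--polar reciprocity, and $P$ by combining the harmonic relation $(E,P;F,X)=-1$ (cited from Milne; essentially the Desargues involution of the pencil restricted to the line $EF$) with the harmonic property of pole and polar, which the paper establishes by a projective transformation carrying the parabola to a circle. You instead encode the whole pencil as $\{M+\lambda\,\ell\ell^{\mathsf{T}}\}$ and reduce the lemma to the one-line identity $G=M^{-1}\bigl((M+\lambda\,\ell\ell^{\mathsf{T}})P\bigr)=P+\lambda(\ell^{\mathsf{T}}P)\,M^{-1}\ell$, together with the observation that $M^{-1}\ell$ represents $O$. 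Your version is shorter and self-contained (no external citation, no auxiliary harmonic-conjugate argument), and it handles the degenerate position $P\in ZV$ transparently. Its only real debt is the identification $\mathbb{F}=\{\mu\,g+\lambda\,\ell^{2}\}$, which you rightly single out as the step needing care: it is the standard description of the pencil of conics bitangent to $g$ at $Z$ and $V$ with prescribed tangents (four independent linear conditions on the five-dimensional space of conics, with $p\cup q$ and the double line $\ell^{2}$ as the visible degenerate members), so the gap is routine, but it should be written out rather than asserted if your argument were to replace the paper's. One small dividend of the paper's approach that yours forgoes is that it identifies the common line through $P$, $O$, $G$ explicitly as the polar of $X$ with respect to the parabola.
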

    
    \begin{figure}[htbp]
  \centering

  \includegraphics[scale=0.4]{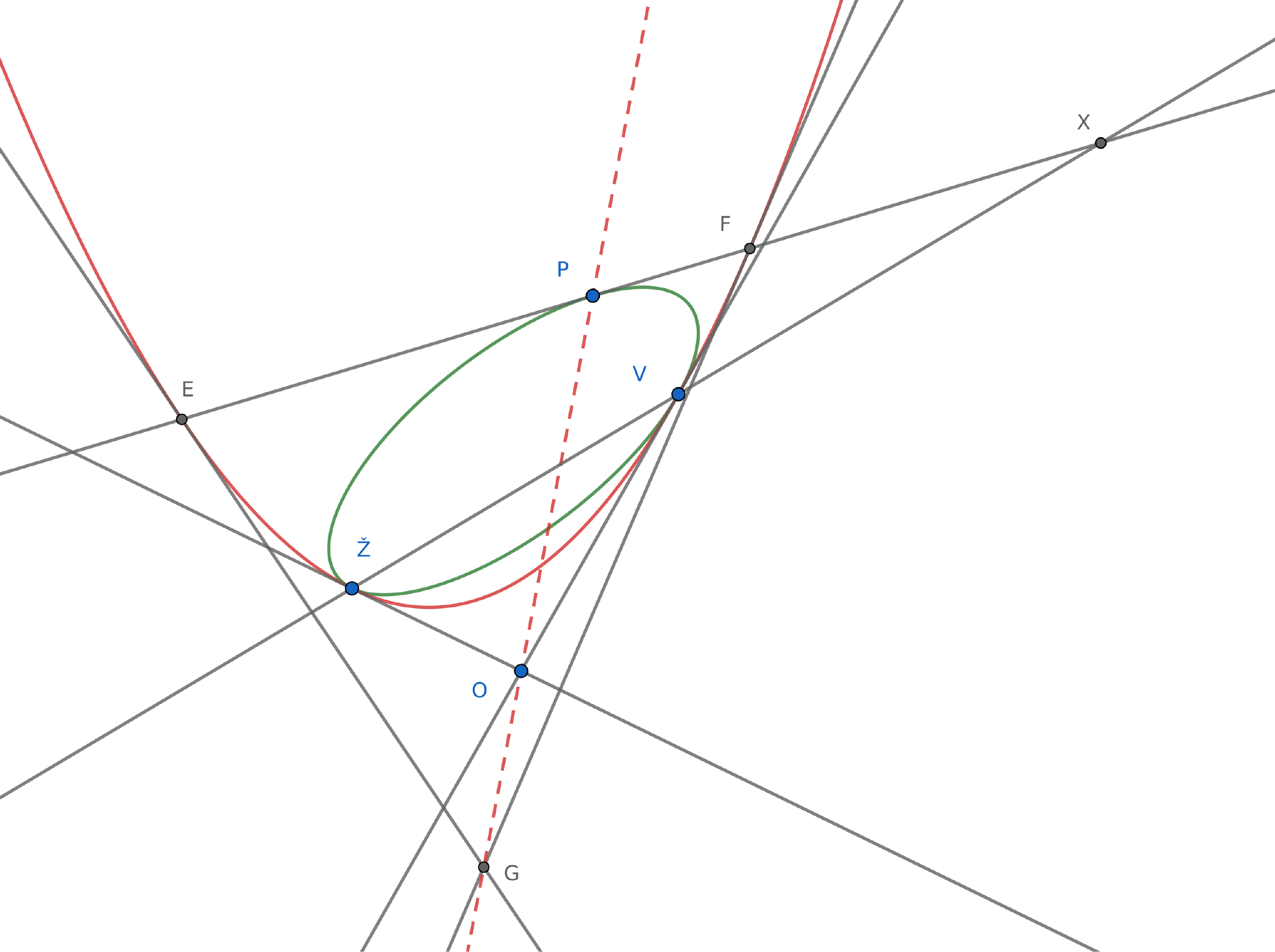}
  \caption{Lemma 5.3}
 \end{figure}
 
    \begin{proof}
    By definition, polar of $O$ with respect to the parabola is line $ZV$. Let $X$ be the intersection of lines $ZV$ and $EF$. Since $X$ lies on the polar of $O$, $O$ lies on the polar of $X$. Analogously, $G$ also lies on the polar of $X$. By proposition 266 from \cite{3}, we know that $(E,P;F,X)=-1$ i.e $X$ and $P$ are harmonic conjugates. We will now prove that the cross-ratio of the 4 points of intersection of any line through $X$ with the parabola and the polar of $X$ is -1. Note that once it is true, we know that every point has exactly one harmonic conjugate, so $P$ will also lie on the polar and the proof is finished. To prove this, we will use a projective transformation. By 1. property from section 2.2 in \cite{2}, all quadrilaterals are projectively equivalent, by 2. , they preserve cross-ratio on every line, and by proposition 7.2.4 from \cite{4} they also preserve the order of the curve. Let $L$ and $R$ be different points on the parabola such that $XL$ and $XR$ are tangents.Line $LR$  is then the polar of $X$ with respect to parabola (by definition). Let $t$ be any line containing $X$ such that it intersects the parabola and $X$'s polar and let the intersections be $F'$ and $P'$ and $E'$ such that $B(X,F',P',E')$. We then define a projective mapping by fixing $R$ and $L$ and sending $E'$ to point $E''$ equidistant from $R$ and $L$ such that $\angle RE''L$ is the right angle (there are two options of which we pick further from $X$). We map $X$ to the point at infinity on the perpendicular bisector of $RL$. Then the parabola maps to a circle with both maps of $LR$ and $E'F'$ being its perimeters. The centre of the circle is $P'$ and so the images are harmonic conjugates, which finishes the proof.
    \end{proof}
    Note that if the conditions of this lemma are met, then we can always find a point from the dual conic since it is equivalent to saying that the intersection of the line containing $O$ and $P$ and the polar to $P$ with respect to the parabola always intersect on the dual conic. Now we only have to deal with the case in which these two lines do not intersect. The only corner cases turn out to be points such that their tangent to the ellipse is parallel to the axis of the parabola, so there are only two cases to handle: if both $X$ and $Y$ are problematic, their midpoint must lie on the perpendicular bisector of $BC$, and that is also sufficient, and the other is when only one point is problematic. We use proposition XXI from \cite{1} to find the center of the ellipse and after that another non-problematic point by reflecting the one we already have about the center of the ellipse. This case is then reduced to the already solved one.
\end{enumerate}

\end{solution}

\begin{remark}
Using \cite{1}, it is possible to use the classification theorem along with other main results to do various Euclidean constructions involving conics and finding appropriate points on them using ruler and compass only. Even though it is possible, the constructions are far from short and are not the focus of this paper.
\end{remark}

\section{Further research}
We have noticed that the dual conic to a side hyperbola over the parabola in an $\chi$-type pencil is another side hyperbola. This leads us to wonder if they are related because we wanted to give span to every ellipse in the most natural manner possible, and we believe dual curves have something in common.
\begin{problem}
Given a side hyperbola $g_1$. Let $g_2$ be a dual to $g_1$ over the parabola. Are $Sp(g_1)$ and $Sp(g_2)$ related?
\end{problem}
\begin{problem}
Let $C(k)$ be the intersection of $AM_a(k)$,$BM_b(k)$ and $CM_c(k)$ (By Ceva's theorem, these points exist). What is the locus of $C(k)$ for $k\in \mathbb{R}$?
\end{problem}
\begin{problem}\label{probelip}Given a triangle $\triangle ABC$. Find the set of all points $D$ such that $A,B,C,O_A,O_B,O_C$ lie on a conic, where $O_A,O_B$ and $O_C$ are the cicumcenters of $\triangle BCD, \triangle ACD , \triangle ABD$, respectively. 
\end{problem}
\begin{conjecture}
The set of all interior points of the triangle in problem \ref{probelip} is an ellipse.
\end{conjecture}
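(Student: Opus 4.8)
We would prove this by analytic geometry, in the style of the proof of Theorem~\ref{thm1}: set up coordinates, write the locus as the zero set of an explicit polynomial, and factor that polynomial with a computer algebra system to isolate the ellipse. Put the circumcenter $O$ of $\triangle ABC$ at the origin with circumradius $1$, so $|A|=|B|=|C|=1$, and for a variable point $D$ write $\rho=|D|^2$. Since $|B|=|C|$, the perpendicular bisector of $BC$ passes through $O$, so $O_A=\lambda_A(B-C)^{\perp}$ for a scalar $\lambda_A$, where $v^{\perp}$ denotes $v$ rotated by $90^{\circ}$; imposing $|O_A-B|=|O_A-D|$ gives
\begin{equation*}
O_A=\frac{\rho-1}{2\,(B-C)^{\perp}\!\cdot(D-B)}\,(B-C)^{\perp},
\end{equation*}
and cyclically for $O_B$ and $O_C$. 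The decisive feature is the common factor $\rho-1$: all three circumcenters collapse to $O$ exactly when $D$ lies on the circumcircle of $\triangle ABC$, so the circumcircle will appear in the locus and, since it meets $\triangle ABC$ only at its vertices, contributes nothing in the interior.

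Next, parametrize the conics through $A,B,C$ as $\mu_a\ell_b\ell_c+\mu_b\ell_c\ell_a+\mu_c\ell_a\ell_b=0$, where $\ell_a,\ell_b,\ell_c$ are linear forms vanishing on $BC,CA,AB$ respectively. Requiring that such a conic also pass through $O_A,O_B,O_C$ is three conditions, each linear in $(\mu_a:\mu_b:\mu_c)$, so a nonzero solution exists precisely where a $3\times3$ determinant vanishes. After clearing the denominators of the circumcenters, the $(j,k)$ entry of that matrix has the form $\alpha_j^{(k)}(\rho-1)^2+\beta_j^{(k)}(\rho-1)d_j+q^{(k)}d_j^{2}$, with $d_j$ affine in $D$ and the remaining coefficients constants of the triangle, so the locus is cut out by an entirely explicit polynomial $\Phi(D)$. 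We would compute and factor $\Phi$ by the computer-algebra method used for~\eqref{eq:6}, expecting a power of $(\rho-1)$ (the circumcircle), perhaps a few further spurious factors — lines or degenerate pieces — and one genuine conic $E$. A good check on $E$: the orthocenter $H$ must lie on it, because for $D=H$ the points $O_A,O_B,O_C$ are the reflections of $O$ in the three sides, so $A,B,C,O_A,O_B,O_C$ are symmetric about the nine-point center and hence lie on the unique conic centered there through $A,B,C$.

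It then remains to show, first, that $E$ is an ellipse — equivalently that the quadratic part of its equation is sign-definite, i.e.\ $E$ has no real point at infinity, which is a direct discriminant computation — and, second, that the interior of $\triangle ABC$ meets the whole locus exactly in $E$. For the second point one shows that every component of $\Phi$ other than $E$ avoids the open triangle (the circumcircle does; the remaining factors must be located and checked individually) and that $E$ itself lies inside the closed triangle, for instance by verifying that $E$ lies on the inner side of each of the lines $BC,CA,AB$ and meets each of them in at most one point.

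The main obstacle is the size of the factorization: $\Phi$ has degree $12$ in two variables with the side lengths of $\triangle ABC$ as parameters, and one must moreover control the position of \emph{every} irreducible component of $\Phi$ relative to the triangle, not merely extract $E$. A more delicate point is that the degree-$12$ leading form of $\Phi$ works out to be a power of $x^2+y^2$, so any conic component of $\Phi$ is a priori circle-type; reconciling this with $E$ being a genuine ellipse is precisely where the computation must be carried out carefully and is the real content of the conjecture.
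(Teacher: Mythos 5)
The paper does not prove this statement: it appears in the section on further research as an open conjecture, so there is no proof of record to compare you against and your text has to stand on its own. Read that way, it is a plan rather than a proof. The setup is sound --- the formula for $O_A$ (hence the collapse of all three circumcenters to $O$ when $|D|=1$ and the appearance of the circumcircle as a spurious component), the parametrization of the net of conics through $A,B,C$, the resulting $3\times 3$ determinant $\Phi$ of degree $12$, and the orthocenter check are all correct (one can add that the incenter also lies on the locus, since for $D=I$ the points $O_A,O_B,O_C$ are the arc midpoints and all six points lie on the circumcircle). But every step that would actually establish the conjecture --- that $\Phi$ has an irreducible conic factor $E$, that $E$ is an ellipse contained in the open triangle, and that no other component of $\Phi$ meets the interior --- is deferred to a factorization that is never carried out, so nothing is proved.

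More seriously, the difficulty you raise in your last paragraph is not a delicate point to be ``reconciled'' but a structural obstruction to the strategy itself. Your own description of the entries shows that, for a generic triangle, the degree-$12$ form of $\Phi$ is $(x^2+y^2)^6$ times a constant, and that constant is nonzero at least for the equilateral triangle (the relevant $3\times3$ determinant of Gram-type reciprocals evaluates to $-27/64$ up to positive factors), hence for all nearby triangles. Since the top-degree form of a product of polynomials is the product of the top-degree forms, the quadratic part of any non-degenerate conic dividing $\Phi$ over $\mathbb{R}$ must divide $(x^2+y^2)^6$ and is therefore proportional to $x^2+y^2$: every conic component of $\Phi$ is literally a circle. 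So unless the interior locus is a circle for \emph{every} triangle --- itself a strong claim that would need proof and seems incompatible with how the locus should deform for scalene triangles --- that locus cannot be an irreducible conic component of $\Phi$ at all; it must be a bounded oval of a factor of degree at least four ($\Phi$ having no real points at infinity guarantees boundedness, and hence an ellipse-\emph{like} oval, but not an ellipse). Carried to completion along the lines you describe, the computation should therefore be expected to refute the literal statement rather than confirm it. To make this a proof (or a disproof) you would need to actually perform the factorization for a concrete scalene triangle and test whether the interior oval satisfies a degree-two equation, or else find a synthetic reason, in the spirit of your orthocenter argument, forcing the locus to be a conic.
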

\section*{Acknowledgement} Authors owe the highest gratitude to dr Đorđe Baralić for mentoring this project and a lot of helpful advice and discussions. Special thanks to Marko Milenković and professor Miloš Milosavljević for their contribution to an early stage of the paper.


\end{document}